\documentclass{article}

\usepackage[T1]{fontenc}

\usepackage{quiver}

\usepackage{amsthm, amsmath, amssymb}

\usepackage{tabularx}
\usepackage{Baskervaldx, mathpazo}

\usepackage[backref=true, backend=bibtex]{biblatex}
\addbibresource{references.bib}
\DefineBibliographyStrings{english}{%
	page             = {},
	pages            = {},
}

\usepackage{enumerate}
\usepackage{tikz}
\usetikzlibrary{arrows,decorations.markings,positioning}
\tikzset{->-/.style={decoration={
			markings,
			mark=at position .5 with {\arrow{>}}},postaction={decorate}}}
\usepackage{tikz-cd}
\usepackage{eucal}
\usepackage{quiver}


\newtheorem{thm}{Theorem}[section]
\newtheorem{cor}[thm]{Corollary}
\newtheorem{prop}[thm]{Proposition}
\newtheorem{lem}[thm]{Lemma}

\theoremstyle{definition}
\newtheorem{defn}[thm]{Definition}

\newtheorem{exam}[thm]{Example}
\newtheorem{rmk}[thm]{Remark}
\theoremstyle{remark}


\newcommand{\Set}{\mathcal{S}et}

\DeclareMathOperator{\Fun}{Fun}

\newcommand{\Finp}{\mathcal{F}in_\ast}
\newcommand{\Setp}{\Set_\ast}
\newcommand{\Fin}{\mathcal{F}in}

\newcommand{\im}{\mathrm{im}}

\newcommand{\yo}{\text{\usefont{U}{min}{m}{n}\symbol{'210}}}
\DeclareFontFamily{U}{min}{}
\DeclareFontShape{U}{min}{m}{n}{<-> udmj30}{}

\newcommand{\op}{\mathrm{op}}

\newcommand{\ulp}[1]{\underline{#1}_+}
\newcommand{\ul}[1]{\underline{#1}}


\newcommand{\KK}{\mathbb{K}}

\renewcommand{\epsilon}{\varepsilon}
\newcommand{\fun}{\mathbb{F}_1}


\newcommand{\Part}{\mathtt{Part}}
\newcommand{\Dynk}{\mathtt{Dynk}}


\usepackage{hyperref}
\hypersetup{
	colorlinks   = true, 
	urlcolor     = black, 
	linkcolor    = blue, 
	citecolor   = black 
}
\title{Dynkin Systems and the One-Point Geometry}
\author{Jonathan Beardsley}

\begin{document}
	
	\maketitle
	
	\begin{abstract}
	In this note I demonstrate that the collection of Dynkin systems on finite sets assembles into a Connes-Consani $\fun$-module, with the collection of partitions of finite sets as a sub-module. The underlying simplicial set of this $\fun$-module is shown to be isomorphic to the delooping of the Krasner hyperfield $\mathbb{K}$, where $1+1=\{0,1\}$. The face and degeneracy maps of the underlying simplicial set of the $\fun$-module of partitions correspond to merging partition blocks and introducing singleton blocks, respectively. I also show that the $\fun$-module of partitions cannot correspond to a set with a binary operation (even partially defined or multivalued) under the ``Eilenberg-MacLane'' embedding. These results imply that the $n$-fold sum of the Dynkin $\fun$-module with itself is isomorphic to the $\fun$-module of the discrete projective geometry on $n$ points.  
	\end{abstract}

    \tableofcontents 
	
	\section{Introduction}
	This paper describes a novel relationship between four structures: Dynkin systems, or quantum mechanical $\sigma$-algebras \cite{derrwilliamson-dynkinsystems,suppes-probabilisticquantum}; discrete, or \textit{thin}, projective geometries in which a line may contain two or fewer points \cite[2.1.8]{faure-frolicher}  \cite{tits-F1}; the Krasner hyperfield $\KK=\{0,1\}$, in which addition is multivalued \cite{krasner-krasnerhyperfield, connesconsani-hyperringofadeles}; simplicial sets and Segal $\Gamma$-sets, especially in their role as models for deloopings, or classifying spaces, of algebraic structures \cite{segal, goerssjardine}. The last three items above have been explored in recent work of Connes and Consani suggesting the category of $\Gamma$-sets, or functors $X\colon \Finp\to\Setp$ from pointed finite sets to all pointed sets, as a model for algebra and geometry over a ``field with one element'' \cite{congammasets,congromovnorm,conabsoluteAG}. Work of myself and Nakamura has also strengthened, and made more explicit, the connection between discrete projective geometries and $\Gamma$-sets \cite{beardsleynakamura}. Before stating the main results of this paper, I will give brief summaries of each of the above notions.
	
	\textbf{Dynkin systems and pre-Dynkin systems} first arose in work of Dynkin on Markov systems and probability theory, where they were called $\lambda$-algebras \cite[1.1]{dynkinTheory}. Given a set $S$, a Dynkin system on $S$ is a family of subsets that contains the empty set, is closed under complementation, and is closed under countable unions of mutually disjoint sets. A pre-Dynkin system is the same except that it is closed only under \textit{finite} unions of mutually disjoint sets. Dynkin uses the fact that any Dynkin system which is additionally closed under arbitrary intersection is a $\sigma$-algebra, and this has become a standard tool in measure theory. Almost contemporaneously, Suppes proposed that Dynkin systems and pre-Dynkin systems, which he called quantum mechanical $\sigma$-algebras and quantum mechanical algebras respectively, should replace $\sigma$-algebras as the set-theoretic basis for measurable spaces in quantum mechanics \cite{suppes-probabilisticquantum}. This idea was also taken up by Gudder, who referred to them as $\sigma$-classes \cite{gudder-extensionofmeasuretheory,gudder-quantumprobabilityspaces}. They are also called (pre-)Dynkin systems, and used in probabilistic applications, in \cite{jun-tighterbounds, derrwilliamson-dynkinsystems, schurzleitgeb-finitistic}.  Since then, Dynkin systems and pre-Dynkin systems have appeared in a number of guises in probability theory, combinatorics and mathematical physics. The list of terminologies for them known to me, in addition to those already mentioned, are the following: additive classes \cite{raorao-charges}; concrete logics \cite{ovchinnikov-concretelogics, desimone-extendingstates}; partial fields \cite{godowski-varieties}; semi-algebras \cite{khrennikov-interpretations}; set-representable orthomodular posets \cite{ptak-booleanorthomodular}; specific sets \cite{dorninger-booleanorthomodular}; generalized fields of events \cite{dorfer-numericaleventspaces}; $\mathrm{d}$-systems \cite{williams-martingales}; $\lambda$-classes \cite{chow-probability}; $\lambda$-fields \cite{fine-probability}. 
	
	\textbf{Combinatorial projective geometries} are sets with some collection of subsets thought of as ``lines'' which satisfy axioms analogous to the usual rules of classical projective geometry \cite{faure-frolicher}. Tits pointed out that many of the geometric counting formulae for projective geometry have combinatorial analogues when one replaces the characteristic $p$ with the number $1$ \cite{tits-F1}. This led Tits to suggest that these can be thought of as ``projective geometries over a field with one element.'' Nakamura and I have shown that the category of combinatorial projective geometries can be embedded in a certain category of $\fun$-modules which we review later in this introduction \cite{beardsleynakamura}. This paper partially arose from an attempt to determine which $\fun$-modules correspond to the \textit{discrete} geometries. The discrete geometry on a set $S$ is the one in which there is a unique line through every pair of points, and no line contains more than those two points. When one attempts to do ``projective geometry'' with such a thing, one begins to see the combinatorial shadow that Tits observed.
	
	\textbf{The Krasner hyperfield} was originally introduced in \cite{krasner-krasnerhyperfield} in the context of $p$-adic analysis and the study of valued fields. It is the set $\KK=\{0,1\}$ equipped with a hyperoperation in which $0$ is the additive identity and $1+1=\{0,1\}$. The tropical hyperfield of tropical geometry can also be realized as an extension of $\KK$ \cite[Example 2.7]{maxwell-smith-tropicalextensions} and it has also appeared in matroid theory \cite{baker-lorscheid-foundations} \cite[Remark 3.19]{baker-bowler-matroids}. It can be useful to note that $\KK$ naturally appears as the set-theoretic quotient $\mathbb{C}/\mathbb{C}^\ast$ and so may be thought of as a kind of augmented projective point (this is in fact true if one replaces $\mathbb{C}$ by any field with at least three elements).
	
	\textbf{Simplicial sets and $\Gamma$-sets} are functors $\Delta^{\op}\to\Set$ and $\Gamma^{\op}\to\Set$ respectively, where $\Delta$ is the category of linearly ordered finite sets with non-decreasing functions and $\Gamma^{\op}$ is equivalent to the category of finite pointed sets with pointed functions. The former serve as a combinatorial foundation for homotopy theory \cite{goerssjardine} and higher category theory \cite{joyalapps,htt}. The latter have commutative monoids as a full subcategory and are the \textit{discrete} objects in Segal's approach to algebraic $K$-theory and stable homotopy theory \cite{segal}. It is also clear from \cite{segal} that every $\Gamma$-set has a ``delooping'' which is a simplicial set that, in good cases, plays the role of a classifying space for the algebraic structure encoded by that $\Gamma$-set. $\Gamma$-sets will play a central role in what follows, a description of which we turn to now.
	
	The setting for much of the work in this paper is the approach to algebra and geometry in ``characteristic one'' developed by Connes and Consani \cite{congammasets,congromovnorm,conabsoluteAG}. They refer to pointed $\Gamma$-sets, i.e.~functors which take $\{\ast\}$ to $\{\ast\}$, as $\mathfrak{s}$-modules or $\mathbb{S}$-modules, with the understanding that these are a model for $\fun$-modules. Given the connections to Arakelov geometry described in the preceding references, along with my recent work with Nakamura connecting them to combinatorics \cite{beardsleynakamura}, I will take the liberty of referring to pointed $\Gamma$-sets as $\fun$-modules throughout this work.
	
	\subsubsection*{Organization of the Paper}
	
	In Section \ref{sec:background}, we review the basic definitions of Connes-Consani $\fun$-modules, the functors of Dynkin systems and partitions, and the relationship between the plasmas of \cite{beardsleynakamura} and $\fun$-modules. We give several examples of plasmas, including the Krasner hyperfield $\KK$ and combinatorial projective geometries.
	
	In Section \ref{sec:plasmasofgeometries} we prove that the fully faithful embedding from combinatorial projective geometries to plasmas takes the discrete geometry on $n$ points to the $n$-fold coproduct $\vee_k\KK$ (Theorem \ref{thm:ProjisK}). We also show that plasma morphisms from the so-called power set plasma $\mathcal{P}(n)$ to $\vee_k\KK$ are in bijection with $k$-fold coproduct of plasma morphisms from $\mathcal{P}(n)$ to $\KK$ (Proposition \ref{prop:P(n)toVKfactorsthroughsummand}). This is essential to showing that the $\fun$-module of the $n$-point discrete geometry is the $n$-fold coproduct of the Dynkin $\fun$-module with itself.
	
	In Section \ref{sec:Dynkinsaskernels} we introduce \textit{KZ-systems} which, for pointed sets, are minimal data for generating all pre-Dynkin systems (Theorem \ref{thm:KZ to prelambda}). It follows from this that plasma morphisms from the power set plasma of any set $S$ into $\KK$ biject with pre-Dynkin systems on $S$ (Corollary \ref{cor:dynkinsystemsarekernels}). We then prove naturality of this isomorphism which allows us to identify the $\fun$-module of the discrete projective geometry on $n$ points with the $n$-fold coproduct of the Dynkin $\fun$-module with itself and the resulting computation of the delooping of $\KK$ (Theorem \ref{thm:F1modofprojisDynk} and Corollary \ref{cor:deloopingofKisDynk}).
	
	In Section \ref{sec:F1modofPartitions}, we show that the $\fun$-module of partitions (functorially) includes into the $\fun$-module of Dynkin systems (Theorem \ref{thm:partsubdynk}). We also explain why the $\fun$-module of partitions cannot be the $\fun$-module of any plasma. Its data is contained in strictly higher degrees.
	
	Finally, in Section \ref{sec:Dynkin Systems as Simplices}, we explicitly describe the simplices of the delooping of the Dynkin $\fun$-module in dimensions 0, 1 and 2, and give examples of some of the 3-simplices. We include discussion of the multiple ways of interpreting this ``geometric'' structure.

		\subsubsection*{Acknowledgements}
	Thanks to Bastiaan Cnossen, Sonja Farr, Landon Fox, Kiran Luecke, Joe Moeller, So Nakamura, Eric Peterson, Manny Reyes, Chris Rogers and David W\"arn for helpful conversations regarding this work. Thanks especially to Sonja Farr, who asked me if I had an explicit description of the delooping of $\KK$, the simplest non-trivial set with a multivalued addition. That was the spark that caused this paper to exist. Also thank you to Peter Taylor for computing the number of Dynkin systems on finite sets in low dimension. My computations had led to me the sequence $1$, $2$, $5$, $19$, $137$, $3708$, but this sequence did not exist on the OEIS when I started this paper. The only place this sequence appeared on the internet at the time, to my knowledge, was Peter Taylor's MathOverflow answer \cite{taylor-dynkinsystemsMO}. This allowed me to conjecture, and then prove, that the delooping of $\KK$ is also counting Dynkin systems. Thanks also to Martin Rubey for asking the question that Peter answered. This work was partially supported by the Simons Foundation, Award ID \#853272.
	
	\section{Background}\label{sec:background}

    As described in the introduction, this work mostly takes place in the context of algebra over $\fun$ suggested by Connes and Consani in \cite{congammasets}. 

    \subsection{Connes-Consani $\fun$-modules}

    The model of $\fun$-modules introduced by Connes and Consani is based on functors from finite pointed sets to all pointed sets. They have primarily used this category to give new foundations for Arakelov geometry, but in this work we will take advantage of their more combinatorial aspects.

    \begin{defn} The categorical foundations of this work depend on the following elementary definitions.
    \begin{enumerate}
        \item Write $\Setp$ for the category of pointed sets and functions between them that preserve the basepoint. We will typically write $\ast$ for a generic base point and $\{\ast\}$ for a generic one-element pointed set.
        
        \item Write $\Finp$ for the full subcategory of $\Setp$ spanned by the sets $\ulp{n}=\{0,1,2,\ldots,n\}$ where $0$ is taken to be the basepoint. When we wish to indicate the \textit{unpointed} set with $n$ elements, we write $\ul{n}=\{1,2,\ldots,n\}$.

        \item Both $\Setp$ and $\Finp$ have a symmetric monoidal structure given by the smash product $\wedge$, where $A\wedge B=(A\times B)/((a,\ast)\sim (\ast,b))$. The inclusion functor $\Finp\hookrightarrow\Setp$ is evidently symmetric monoidal with respect to this structure.
        \end{enumerate}
    \end{defn}

    \begin{defn}
        Write $\mathrm{Mod}_{\fun}$ for $\Fun_\ast(\Finp,\Setp)$ of pointed functors from $\Finp$ to $\Setp$. We denote the inclusion functor $\Finp\subseteq\Setp$ by $\fun$. This category admits the \textit{Day convolution} symmetric monoidal structure with respect to the smash product \cite{day-closedcategoriesoffunctors,kelly-im-universalconvolution}. Moreover, $\fun$ is the monoidal unit for this symmetric monoidal structure.
    \end{defn}
	
	\subsection{Functors of Partitions and Dynkin Systems}

    For a set $S$, $\mathcal{P}(S)$ will always denote the power set of $S$. The symbol $\varnothing$ will exclusively refer to the empty set \textit{as an element of a power set}. Whenever we want to talk about the empty set in any other context we will write $\{~~\}$. In the case that we are working both with a power set $\mathcal{P}(S)$ and a double power set $\mathcal{P}(\mathcal{P}(S))$ we will write $\varnothing $ for the element of the former and $\{~~\}$ for the element of the latter.
    
	\begin{defn}\label{def:dynkinsystems}
		Let $S$ be a set.
		
		\begin{enumerate}
			\item Define $\Dynk^{pre}(S)$ to be the set of families of subsets $X\subseteq\mathcal{P}(S)$ that satisfy the following conditions:
			\begin{enumerate}
				\item $\varnothing\in X$.
				\item If $A\in X$ then $S\setminus A\in X$.
				\item If $A,B\in X$ and $A\cap B=\varnothing$ then $A\cup B\in X$.
			\end{enumerate} We refer to $\Dynk^{pre}(S)$ as the set of pre-Dynkin systems on $S$. If $X$ satisfies the additional following condition then we call it a Dynkin system:
			\begin{enumerate}
				\item[(d)] If $\{A_i\}_{i\in\mathbb{N}}$ is a countable family of pairwise disjoint subsets and $A_i\in X$ for all $i$ then $\cup_iA_i\in X$.
			\end{enumerate}
			We write $\Dynk(S)$ for the set of all Dynkin systems on $S$.
			\item Define $\Part(S)$ to be the set of families $X\subseteq\mathcal{P}(S)$ such that $\cup_{A\in X}A=S$, $A\cap B=\varnothing$ whenever $A\neq B$ for all $A,B\in X$, and $A\neq\varnothing$ for all $A\in X$.
		\end{enumerate}
	\end{defn}
	
	Typically, we will be working with finite sets. In that case, the distinction between Dynkin systems and pre-Dynkin systems is obviously meaningless.
	
	\begin{defn}
		Let $\phi\colon S\to T$ be a function in $\Set$. Then we define:
		\begin{enumerate}
			\item A function $\Dynk(\phi)\colon\Dynk(S)\to\Dynk(T))$ by \[\Dynk(\phi)(X)=\{A\subseteq T:\phi^{-1}(A)\in X\}\]
			\item A function $\Part(\phi)\colon\Part(S)\to\Part(T)$ by setting $\Part(\phi)(X)$ to be the (partition associated to the) smallest equivalence relation on $T$ generated by the image of (the equivalence relation associated to) $X$.
		\end{enumerate}
	\end{defn}

\begin{rmk}\label{rmk:explicitpartitionblocks}
	We can define $\Part(\phi)(X)$ more explicitly. Note that the image of the equivalence relation associated to $P$, under $\phi$, is automatically a symmetric relation. Closing under reflexivity means adding a partition block $\{x\}$ for each $x\in T\setminus\im(\phi)$. Closing under transitivity means placing $x,y\in T$ in the same partition block whenever there is a finite sequence $\{x=x_0,x_1,\ldots,x_m=y\}$ such that for every $0\leq i\leq m$, $x_i$ and $x_{i+1}$ are both in $\phi(P_{k_i})$ for some $k_i$. We may think of this as merging chains of overlapping blocks into single blocks.
\end{rmk}

The next two results follow from standard arguments.

\begin{lem}\label{lem:inducedpartitionsuction}
	Let $P=\{P_i\}$ be a partition of a set $S$ and $\phi\colon S\to T$ a function. If $Q\in\Part(\phi)(P)$ and $\phi(P_i)\cap Q\neq\varnothing$ then $\phi(P_i)\subseteq Q$.
\end{lem}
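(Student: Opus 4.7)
The plan is to unpack $\Part(\phi)(P)$ using the explicit description given in Remark \ref{rmk:explicitpartitionblocks} and observe that any single image $\phi(P_i)$ is already contained in a single equivalence class of the generated relation, so whenever $Q$ meets $\phi(P_i)$ it must swallow all of $\phi(P_i)$.

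Concretely, I would proceed as follows. Fix $t \in \phi(P_i) \cap Q$, so $t$ lies in the equivalence class $Q$ of $\Part(\phi)(P)$. For any other $t' \in \phi(P_i)$, I want to show $t' \in Q$, i.e.\ that $t' \sim t$ in the generated equivalence relation. By Remark \ref{rmk:explicitpartitionblocks}, two elements of $T$ are equivalent exactly when they are connected by a finite chain $t = x_0, x_1, \ldots, x_m = t'$ with consecutive terms lying together in some $\phi(P_{k_j})$. Here the trivial chain $t = x_0, x_1 = t'$ of length one suffices, since by assumption both $t$ and $t'$ lie in $\phi(P_i)$. Hence $t' \sim t$, so $t' \in Q$ and $\phi(P_i) \subseteq Q$.

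There is essentially no obstacle: the lemma is an immediate consequence of the remark, because the generating relation already identifies all points of a single image $\phi(P_i)$. The only thing to be a little careful about is the convention that $\Part(\phi)(P)$ refers to the \emph{partition} associated to the generated equivalence relation, so ``$Q \in \Part(\phi)(P)$'' means $Q$ is an equivalence class; under that reading, ``$\phi(P_i) \cap Q \neq \varnothing$ implies $\phi(P_i) \subseteq Q$'' is literally the statement that equivalence classes are saturated under the generating relation.
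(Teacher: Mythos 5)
Your argument is correct: since any two points of $\phi(P_i)$ are already related by the generating relation (being images of points in the same block), each $\phi(P_i)$ lies in a single class of the generated equivalence relation, which is exactly the claim. The paper omits the proof, labelling the lemma as following ``from standard arguments,'' and yours is precisely that standard argument.
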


	\begin{prop}
		The assignments~ $\Dynk$ and ~$\Part$ assemble into functors $\Finp\to\Set_\ast$, where the basepoint of ~$\Dynk(\ulp{n})$ is the powerset Dynkin system on $\ulp{n}$ and the base point of~ $\Part(\ulp{n})$ is the partition in which all blocks contain exactly one element.
	\end{prop}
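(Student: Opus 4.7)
The plan is, for each of $\Dynk$ and $\Part$, to verify in turn: (i) that $\Dynk(\phi)$ and $\Part(\phi)$ are well-defined functions into the indicated target sets; (ii) that composition and identities are respected; and (iii) that the chosen basepoint is preserved by every $\phi$ in $\Finp$ (including the degenerate object $\ulp{0}$, where both $\Dynk$ and $\Part$ visibly take the value $\{\ast\}$).

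For $\Dynk$: given a Dynkin system $X$ on $\ulp{m}$ and $\phi\colon\ulp{m}\to\ulp{n}$, the set $\Dynk(\phi)(X)$ satisfies axioms (a)--(c) of Definition \ref{def:dynkinsystems} because $\phi^{-1}(\varnothing)=\varnothing$, $\phi^{-1}(\ulp{n}\setminus A)=\ulp{m}\setminus\phi^{-1}(A)$, and $\phi^{-1}$ preserves both pairwise disjointness and unions; axiom (d) is automatic as the ambient sets are finite. Functoriality follows immediately from $(\psi\circ\phi)^{-1}=\phi^{-1}\circ\psi^{-1}$: the condition $\phi^{-1}(\psi^{-1}(A))\in X$ is the definition of both $A\in\Dynk(\psi\circ\phi)(X)$ and $A\in\Dynk(\psi)(\Dynk(\phi)(X))$. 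Basepoint preservation is automatic since $\phi^{-1}(A)\in\mathcal{P}(\ulp{m})$ for every $A\subseteq\ulp{n}$, whence $\Dynk(\phi)(\mathcal{P}(\ulp{m}))=\mathcal{P}(\ulp{n})$.

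For $\Part$: well-definedness is already furnished by Remark \ref{rmk:explicitpartitionblocks}, which presents $\Part(\phi)(X)$ as a concrete partition of $\ulp{n}$. The subtler step is functoriality. Writing $\sim_X$ for the equivalence relation associated to a partition $X$ and $R^{\sim}$ for the equivalence closure of a relation $R$, the definition reads $\sim_{\Part(\phi)(X)}=\bigl((\phi\times\phi)(\sim_X)\bigr)^{\sim}$, so $\Part(\psi\circ\phi)=\Part(\psi)\circ\Part(\phi)$ reduces to showing that, for any relation $R$ and function $\psi$, the equivalence closures of $\psi(R)$ and $\psi(R^{\sim})$ coincide. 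The inclusion $\subseteq$ is immediate; for the reverse, any chain witnessing a pair in $R^{\sim}$ is carried by $\psi$ to a chain witnessing the pushed-forward pair in $(\psi(R))^{\sim}$. For basepoint preservation, if $X$ is the all-singletons partition of $\ulp{m}$ then $\sim_X$ is the diagonal; its image under $\phi$ is a subset of the diagonal on $\ulp{n}$, and closing under reflexivity (symmetry and transitivity of the diagonal being already satisfied) recovers the full diagonal, i.e.~the all-singletons partition.

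I expect the main obstacle to be the functoriality of $\Part$ in the non-injective case, where $\phi$ can merge distinct blocks and $\psi$ can merge them still further, so iterated equivalence closure must be shown to coincide with a single closure taken after composition. As indicated above, this is handled by the chain characterization of equivalence closure and is the only step that requires more than formal manipulation of preimages, images, and diagonal relations.
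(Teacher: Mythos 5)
Your proof is correct, and it fills in exactly the ``standard arguments'' that the paper invokes without writing them out: preimage compatibility with $\varnothing$, complements, and disjoint unions for $\Dynk$, and the chain characterization of equivalence closure (showing $(\psi(R))^{\sim}=(\psi(R^{\sim}))^{\sim}$) for the functoriality of $\Part$. Since the paper offers no proof of its own, there is no divergence of approach to report.
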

	
	\begin{rmk}
		It was very helpfully pointed out to me (essentially simultaneously) by Bastiaan Cnossen, Joe Moeller and David W\"arn that there is another description of the functor $\Part$. Recall that the data of a partition of $\ulp{n}$ with $k$ blocks is equivalent to the data of a surjection $\pi\colon\ulp{n}\to\ul{k}$. Therefore we may think of $\Part$ as taking $\ulp{n}$ to the set of surjections with domain $\ulp{n}$ in $\Fin$, the category of finite sets. It is then relatively straightforward to check that if $\ulp{n}\to\ul{k}$ is a surjection and $\phi\colon \ulp{n}\to\ulp{m}$ is a pointed function then the induced partition of $\ulp{m}$ is the one associated to the surjection in the right hand side of the following pushout diagram:
		\[\begin{tikzcd}
			{\ulp{n}} & {\ulp{m}} \\
			{\ul{k}} & {\ul{k}\coprod_{\ulp{n}}\ulp{m}}
			\arrow["\phi", from=1-1, to=1-2]
			\arrow["\pi"', two heads, from=1-1, to=2-1]
			\arrow[two heads, from=1-2, to=2-2]
			\arrow[from=2-1, to=2-2]
			\arrow["\lrcorner"{anchor=center, pos=0.125, rotate=180}, draw=none, from=2-2, to=1-1]
		\end{tikzcd}\]
	W\"arn in particular noticed that this functor has an elegant description as the canonical ``presheaf of subobjects'' on $\Fin^{\op}$.
	\end{rmk}
	
	\subsection{Plasmas and $\fun$-modules}
	
	We now recall the basic definitions from \cite{beardsleynakamura}. Following that, we point out that the plasmas or mosaics (using \cite{beardsleynakamura} or \cite{nakamurareyes-mosaics} respectively) associated to the discrete geometries on $n$ elements are easily recognized as coproducts of the (underlying plasma of the) Krasner hyperfield with itself. 
	\begin{defn}
		We define a \textit{plasma} to be a set $X$ equipped with a hyperoperation $\star\colon X\times X\to \mathcal{P}(X)$ and a ``weak identity'' $0\in X$ such that:
		\begin{enumerate}
			\item For every $x,y\in X$, $x\star y=y\star x$.
			\item For every $x\in X$, $x\in x\star 0$.
		\end{enumerate}
		A morphism of plasmas $(X,\star,0)\to (Y,\boxplus,e)$ is a function $f\colon X\to Y$ such that $f(0)=e$ and $f(x\star x')\subseteq f(x)\boxplus f(x')$ for all $x,x'\in X$. Write $Plas(X,Y)$ for the set of plasma morphisms from $X$ to $Y$.
	\end{defn}

    Plasmas and $\fun$-modules are related by the following adjunction, which is proven in \cite[Theorem 3.13]{beardsleynakamura}.

    \begin{prop}\label{prop:plasModF1adjunction}
There is an adjunction
\[
\begin{tikzcd}[column sep=large]
    \mathrm{Mod}_{\fun}
    \arrow[r, bend left=20, "\tau_{\leq 2}", shift left=1ex, start anchor={[xshift=-.5em,yshift=-.5em]},end anchor={[xshift=.5ex,yshift=-.5ex]}]
    \arrow[r, phantom, "\bot"]
    &
    Plas
    \arrow[l, bend left=20, "\widehat{H}", shift left=1ex, end anchor={[xshift=-1ex,yshift=.5ex]}, start anchor={[xshift=.7ex,yshift=0ex]}]
\end{tikzcd}
\]
in which $\tau_{\leq 2}$ denotes the restriction to the full subcategory of $\Finp$ spanned by $\ulp{0}$, $\ulp{1}$ and $\ulp{2}$ and the functor $\widehat{H}$ is fully faithful.
    \end{prop}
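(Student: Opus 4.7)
The plan is to build the adjunction from its left adjoint, which has a concrete description, and then construct $\widehat{H}$ essentially as a right Kan extension.

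First I would describe $\tau_{\leq 2}$ explicitly. For an $\fun$-module $X$, set the underlying pointed set of $\tau_{\leq 2}(X)$ to be $X(\ulp{1})$, with weak identity the basepoint. To extract the hyperoperation, use the three pointed maps from $\ulp{2}$ to $\ulp{1}$: the two ``projections'' $p_1, p_2\colon \ulp{2}\to\ulp{1}$ that send one non-basepoint element to the basepoint, and the fold $\nabla\colon\ulp{2}\to\ulp{1}$ that sends both non-basepoint elements to $1$. Together these give a ``Segal span'' $X(\ulp{1})\times X(\ulp{1}) \xleftarrow{(X(p_1),X(p_2))} X(\ulp{2}) \xrightarrow{X(\nabla)} X(\ulp{1})$, and I define the hyperoperation by
\[
x\star y \;=\; \{\,X(\nabla)(z) : z\in X(\ulp{2}),\ X(p_1)(z)=x,\ X(p_2)(z)=y\,\}.
\]
Commutativity of $\star$ follows from the swap automorphism of $\ulp{2}$; the weak identity $x\in x\star 0$ follows from pulling back along a section $s\colon\ulp{1}\to\ulp{2}$ of $p_1$ (with $\nabla\circ s=\mathrm{id}$). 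Functoriality of $\tau_{\leq 2}$ with respect to $\fun$-module maps is immediate.

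Next I would construct $\widehat{H}$. Guided by the rest of the paper, where plasma morphisms out of the power set plasma $\mathcal{P}(n)$ on $\ul{n}$ are the correct ``$n$-ary test object,'' I set $\widehat{H}(P)(\ulp{n}) = Plas(\mathcal{P}(n),P)$, pointed by the zero morphism, with functoriality in $\ulp{n}$ induced by pulling back subsets along the pointed maps $\phi\colon\ulp{n}\to\ulp{m}$. The adjunction bijection $\mathrm{Mod}_{\fun}(X,\widehat{H}(P))\cong Plas(\tau_{\leq 2}(X),P)$ is built as follows: given $\eta\colon X\Rightarrow\widehat{H}(P)$, restrict at $\ulp{1}$, noting $\widehat{H}(P)(\ulp{1})=Plas(\mathcal{P}(1),P)\cong P$ since a plasma morphism from $\mathcal{P}(1)=\{\varnothing,\{1\}\}$ into $P$ is exactly the datum of the image of $\{1\}$. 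The fact that the resulting map is a plasma morphism is forced by naturality of $\eta$ at the three maps $p_1,p_2,\nabla$. Conversely, given a plasma morphism $f\colon\tau_{\leq 2}(X)\to P$, extend it to all of $\Finp$ by the universal property: at $\ulp{n}$, an element $x\in X(\ulp{n})$ determines, via precomposition with all $\phi\colon\ulp{1}\to\ulp{n}$ or $\phi\colon\ulp{2}\to\ulp{n}$, a compatible family of values in $P$ that assembles into a plasma morphism $\mathcal{P}(n)\to P$.

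For full faithfulness of $\widehat{H}$ it suffices that the counit $\tau_{\leq 2}\widehat{H}(P)\to P$ be an isomorphism of plasmas. The underlying-set identification $\widehat{H}(P)(\ulp{1})\cong P$ was noted above, and one verifies that the hyperoperation agrees by unpacking $\widehat{H}(P)(\ulp{2})=Plas(\mathcal{P}(2),P)$ and the two projections and the fold; the disjoint-union relation $\{1\}\cup\{2\}$ in $\mathcal{P}(2)$ is precisely what encodes $\star$ in $P$. I expect the main obstacle to be precisely this last verification: it requires carefully matching the plasma morphism condition $f(x\star x')\subseteq f(x)\boxplus f(x')$ with the naturality of the extended $\fun$-module, so that the hyperoperation reconstructed via the Segal span on $\widehat{H}(P)$ is genuinely equal to (and not merely contained in or containing) the original $\star$ on $P$. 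Once the counit is shown to be an isomorphism, fully faithfulness of $\widehat{H}$ is a formal consequence of the adjunction, and the triangle identities reduce to bookkeeping about the identifications $\widehat{H}(P)(\ulp{1})\cong P$ and $\widehat{H}(P)(\ulp{2})\cong\{(x,y,z)\in P^3:z\in x\star y\}$.
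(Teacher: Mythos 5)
Your proposal is correct and follows essentially the same route as the paper, which defers the formal proof to \cite[Theorem 3.13]{beardsleynakamura} but whose Propositions \ref{prop:PnIscoGammaPlasma} and \ref{prop:Hiscorepresented} make clear that $\widehat{H}$ is exactly the restricted Yoneda functor $P\mapsto Plas(\mathcal{P}(-),P)$ you construct, with the adjunction bijection and the counit isomorphism $Plas(\mathcal{P}(2),P)\cong\{(x,y,z):z\in x\boxplus y\}$ verified by the same Segal-span bookkeeping. The only slip is a harmless variance typo: the compatible family attached to $x\in X(\ulp{n})$ arises by pushing $x$ forward along the characteristic maps $\ulp{n}\to\ulp{1}$ and $\ulp{n}\to\ulp{2}$ (which correspond, contravariantly via $\mathcal{P}$, to elements and disjoint pairs in $\mathcal{P}(n)$), not by precomposition with maps $\ulp{1}\to\ulp{n}$.
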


	By Proposition \ref{prop:plasModF1adjunction}, every $\fun$-module yields a plasma by restriction. However, there are a few examples that will be especially useful to us.
	
	\begin{exam}[Krasner Hypergroup]
		Let $\mathbb{K}$ denote the set $\{0,1\}$ with the hyperoperation that has $0$ as weak identity and $1+1=\{0,1\}$. This object is a plasma and in fact an Abelian hypergroup, i.e.~its hyperoperation is suitably associative. It also has a compatible multiplicative structure with multiplicative inverses making it into a so-called hyperfield, but we will not use that structure in this paper.
	\end{exam}
	
	\begin{rmk}
		We can think of $\KK$ as being obtained by taking the cofree plasma on the singleton set $\{1\}$. In other words, the right adjoint of the forgetful functor $Plas\to\Set$ takes the set $\{1\}$ to $\KK$ (note that the \textit{free} plasma on a point is the set $\{0,1\}$ with $1+1=\varnothing\in\mathcal{P}(\KK)$). Therefore, $\KK$ is what we get if take the cofree plasma associated to the projective point $(\mathbb{C}-\{0\})/\mathbb{C}^\ast$.
	\end{rmk}
	
	\begin{exam}[Power Set Plasmas]\label{example:powersetplasma}
		Let $S$ be a set and $\mathcal{P}(S)$ its power set. The power set admits a plasma structure given by setting $a\star b=a\cup b$ whenever $a$ and $b$ are disjoint, and $a\star b=\{~\}$ whenever $a$ and $b$ are non-disjoint, where $\{~\}$ indicates the empty family in $\mathcal{P}(\mathcal{P}(n))$ (not to be confused with $\varnothing\in\mathcal{P}(n)$). Note that, in general, this hyperoperation is not associative. 
	\end{exam}
	
	\begin{rmk}
		The power set plasma of $\ul{n}$ is $\tau_{\leq 2}$ applied to the $\fun$-module corepresented by $\ulp{n}$. Equivalently, $\mathcal{P}(n)$ is the restriction plasma of the free $\fun$-module on $\ulp{n}$ (with respect to the Day convolution monoidal structure).
	\end{rmk}
	
	The power set plasmas, as suggested by their construction as free objects, play a particularly important role in the relationship between plasmas and $\fun$-modules. Indeed, they assemble into a kind of ``nerve'' functor which recovers the ``Eilenberg-MacLane'' functor of Proposition \ref{prop:plasModF1adjunction}. The following two results are Lemma 4.7 and Theorem 4.9 of \cite{beardsleynakamura} respectively.
	
	\begin{prop}\label{prop:PnIscoGammaPlasma}
		The assignment $\ulp{n}\mapsto\mathcal{P}(n)$ with the above plasma structure extends to a functor $\mathcal{P}\colon\Finp^{\op}\to Plas$.
	\end{prop}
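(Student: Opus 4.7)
The plan is to define $\mathcal{P}$ on morphisms by pulling subsets back along pointed maps and then to verify both basepoint preservation, the plasma-morphism inclusion, and functoriality by direct calculation.

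First, given a pointed map $\phi\colon\ulp{n}\to\ulp{m}$, I would define $\mathcal{P}(\phi)\colon\mathcal{P}(m)\to\mathcal{P}(n)$ by $A\mapsto\phi^{-1}(A)$. Here we think of elements of $\mathcal{P}(m)$ as subsets of $\ul{m}=\ulp{m}\setminus\{0\}$; since $\phi(0)=0\notin A$, the preimage $\phi^{-1}(A)$ lies in $\ul{n}$, so this lands in $\mathcal{P}(n)$. The basepoint of the power set plasma being the element $\varnothing\in\mathcal{P}(m)$, we have $\mathcal{P}(\phi)(\varnothing)=\phi^{-1}(\varnothing)=\varnothing$, so basepoints are preserved.

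Next I would verify the plasma morphism inclusion $\mathcal{P}(\phi)(A\star B)\subseteq\mathcal{P}(\phi)(A)\star\mathcal{P}(\phi)(B)$ by a two-case analysis on $A,B\in\mathcal{P}(m)$. If $A\cap B=\varnothing$, then $A\star B=\{A\cup B\}$, so $\mathcal{P}(\phi)(A\star B)=\{\phi^{-1}(A\cup B)\}=\{\phi^{-1}(A)\cup\phi^{-1}(B)\}$; since preimage preserves disjointness, $\phi^{-1}(A)\cap\phi^{-1}(B)=\phi^{-1}(A\cap B)=\varnothing$, hence $\mathcal{P}(\phi)(A)\star\mathcal{P}(\phi)(B)=\{\phi^{-1}(A)\cup\phi^{-1}(B)\}$, and the inclusion is in fact an equality. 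If instead $A\cap B\neq\varnothing$, then $A\star B=\{\,\}$, so $\mathcal{P}(\phi)(A\star B)=\{\,\}$, which is trivially contained in whatever $\mathcal{P}(\phi)(A)\star\mathcal{P}(\phi)(B)$ is. Thus $\mathcal{P}(\phi)$ is a plasma morphism in either case.

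Finally, for the functoriality axioms, $\mathcal{P}(\mathrm{id}_{\ulp{n}})(A)=\mathrm{id}^{-1}(A)=A$ gives the identity law, and for composable $\ulp{n}\xrightarrow{\phi}\ulp{m}\xrightarrow{\psi}\ulp{k}$, the identity $(\psi\circ\phi)^{-1}(A)=\phi^{-1}(\psi^{-1}(A))$ yields $\mathcal{P}(\psi\circ\phi)=\mathcal{P}(\phi)\circ\mathcal{P}(\psi)$, which is the contravariant composition law required for a functor out of $\Finp^{\op}$. There is no real obstacle here: the only mildly nontrivial step is the plasma morphism check, but the two-case verification is immediate because preimage commutes with intersection and union, so the proof reduces to a short check that I would present as a short paragraph rather than a computation.
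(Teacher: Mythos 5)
Your proof is correct, and since the paper simply cites this statement as Lemma 4.7 of \cite{beardsleynakamura} rather than proving it, your direct verification (define $\mathcal{P}(\phi)=\phi^{-1}$, check basepoint preservation, the containment $\mathcal{P}(\phi)(A\star B)\subseteq\mathcal{P}(\phi)(A)\star\mathcal{P}(\phi)(B)$ via the disjoint/non-disjoint case split, and contravariant functoriality from $(\psi\circ\phi)^{-1}=\phi^{-1}\circ\psi^{-1}$) is exactly the intended argument; it is also consistent with the paper's later use of $\mathcal{P}(\phi)$ as $\phi^{-1}\colon\mathcal{P}(m)\to\mathcal{P}(n)$ in Proposition \ref{prop:naturality of Phi}.
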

	
	\begin{prop}\label{prop:Hiscorepresented}
		If $\yo\colon Plas\to\Fun(Plas^{\op},\Setp)$ denotes the Yoneda embedding and $(\mathcal{P}^{\op})^\ast\colon \Fun(Plas^{\op},\Set_\ast)\to \Fun(\Fin_\ast,\Set_\ast)$ denotes the functor which precomposes with $\mathcal{P}^{\op}\colon \Finp\to Plas^{\op}$, then the embedding $\widehat{H}\colon Plasm\hookrightarrow\mathrm{Mod}_{\fun}$ of Proposition \ref{prop:plasModF1adjunction}  is isomorphic to $(\mathcal{P}^{\op})^\ast\circ\yo$. In particular, $\widehat{H}M(\ulp{n})$ is functorially isomorphic to $Plas(\mathcal{P}(n),M)$. 
	\end{prop}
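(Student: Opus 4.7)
The plan is to use the adjunction $\tau_{\leq 2}\dashv \widehat{H}$ of Proposition \ref{prop:plasModF1adjunction} together with the Yoneda lemma in $\mathrm{Mod}_{\fun}$. For a fixed plasma $M$ and each $n$, we have by co-Yoneda that $\widehat{H}M(\ulp{n}) \cong \mathrm{Mod}_{\fun}(\fun[\ulp{n}], \widehat{H}M)$, where $\fun[\ulp{n}] = \Finp(\ulp{n},-)_+$ is the free (pointed) $\fun$-module on $\ulp{n}$. Applying the adjunction moves this across to $\mathrm{Plas}(\tau_{\leq 2}\fun[\ulp{n}], M)$. By the remark following Example \ref{example:powersetplasma}, $\tau_{\leq 2}\fun[\ulp{n}]$ is precisely the power set plasma $\mathcal{P}(n)$, so we obtain $\widehat{H}M(\ulp{n}) \cong \mathrm{Plas}(\mathcal{P}(n), M)$ pointwise.

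Next I would promote this pointwise isomorphism to an isomorphism of functors $\Finp\to\Setp$. Functoriality in $\ulp{n}$ on the left is by definition the action of $\widehat{H}M$ on morphisms in $\Finp$. Functoriality in $\ulp{n}$ on the right is via Proposition \ref{prop:PnIscoGammaPlasma}, which provides $\mathcal{P}\colon\Finp^{\op}\to\mathrm{Plas}$ so that $\mathrm{Plas}(\mathcal{P}(-), M)$ is a well-defined functor $\Finp\to\Setp$. To check that the above chain of isomorphisms is natural in $\ulp{n}$, I would trace through the fact that all three steps (the co-Yoneda isomorphism, the adjunction counit/unit, and the identification $\tau_{\leq 2}\fun[\ulp{n}]\cong\mathcal{P}(n)$) are themselves natural in $\ulp{n}$, the last essentially amounting to showing that precomposing with $f\colon\ulp{n}\to\ulp{m}$ on free $\fun$-modules agrees, under $\tau_{\leq 2}$, with the action $\mathcal{P}(f)\colon\mathcal{P}(m)\to\mathcal{P}(n)$ of the functor from Proposition \ref{prop:PnIscoGammaPlasma}. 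This is the heart of the argument, though once the identification $\tau_{\leq 2}\fun[\ulp{n}]\cong\mathcal{P}(n)$ is established naturally, the rest is formal adjunction bookkeeping.

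Once we have $\widehat{H}M \cong \mathrm{Plas}(\mathcal{P}(-), M)$ naturally in both $\ulp{n}$ and $M$, the second formulation $\widehat{H} \cong (\mathcal{P}^{\op})^\ast\circ \yo$ is immediate: unwinding definitions, $((\mathcal{P}^{\op})^\ast\circ \yo)(M)$ sends $\ulp{n}$ to $\yo(M)(\mathcal{P}(n)) = \mathrm{Plas}(\mathcal{P}(n), M)$, which is exactly the functor just constructed.

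The main potential obstacle is the basepoint bookkeeping and naturality of the identification $\tau_{\leq 2}\fun[\ulp{n}]\cong\mathcal{P}(n)$: the power set plasma $\mathcal{P}(n)$ has empty set as its basepoint and its hyperoperation is defined by unions, and one must verify that the universal property of the free $\fun$-module, restricted to $\ulp{0},\ulp{1},\ulp{2}$, indeed presents this plasma structure (with the disjoint-union-or-empty-family rule of Example \ref{example:powersetplasma} arising automatically). Everything else is a formal consequence of the adjunction and the Yoneda lemma.
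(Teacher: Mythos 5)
The paper does not actually prove this proposition: it is imported wholesale from \cite{beardsleynakamura} (it is Theorem 4.9 there), so there is no in-paper argument to compare against. Your reconstruction is correct and is surely the intended one: Yoneda for the corepresentable functor, transposition across the adjunction $\tau_{\leq 2}\dashv\widehat{H}$, and the identification of $\tau_{\leq 2}$ of the corepresentable with the power set plasma --- which is exactly what the remark following Example \ref{example:powersetplasma} records. You also correctly isolate the only non-formal step, namely that $\ulp{n}\mapsto\tau_{\leq 2}\Finp(\ulp{n},-)$ and $\mathcal{P}\colon\Finp^{\op}\to Plas$ agree as functors: concretely, $\Finp(\ulp{n},\ulp{1})\cong\mathcal{P}(n)$ via $f\mapsto f^{-1}(1)$, the set $\Finp(\ulp{n},\ulp{2})$ is the set of ordered pairs of disjoint subsets, the fold map $\ulp{2}\to\ulp{1}$ induces disjoint union (and the empty hyperproduct otherwise), and a pointed map $\phi$ acts by $\phi^{-1}$ on both sides, matching Proposition \ref{prop:PnIscoGammaPlasma}. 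One small correction: the free pointed $\fun$-module on $\ulp{n}$ is the corepresentable $\Finp(\ulp{n},-)$ itself, not $\Finp(\ulp{n},-)_+$; each hom-set is already pointed by the constant map to the basepoint, and adjoining a disjoint basepoint would give a functor whose value on $\ulp{0}$ has two elements, hence not an object of $\mathrm{Mod}_{\fun}$. This slip does not affect the argument, since the relevant hom-set computation comes out the same either way.
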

	
		\begin{exam}[Projective Geometries]\label{exam:projgeoms}
		In \cite{beardsleynakamura}, Nakamura and I extended the results of \cite{nakamurareyes-mosaics} to show that there is a fully faithful embedding of the category of combinatorial projective geometries (in the sense of \cite{faure-frolicher}), with collineations as morphisms, into the category of plasmas. We will write $\mathbb{P}\colon Proj\hookrightarrow Plas$ for this functor.
	\end{exam}  
	
	As a category, plasmas are relatively nice objects, but we will only need the following basic categorical fact in what follows.
	
	\begin{prop}\label{prop:plasmacoprod}
		The category of plasmas admits finite coproducts.
	\end{prop}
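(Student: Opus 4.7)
The plan is to build finite coproducts directly: an initial object as the empty coproduct, and binary coproducts via a wedge construction. The initial object is immediate — the singleton plasma $\{0\}$ with $0 \star 0 = \{0\}$, the only hyperoperation compatible with the weak-identity axiom; any plasma morphism from it is uniquely determined by sending $0$ to the basepoint.

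For the binary coproduct of $(X, \star_X, 0_X)$ and $(Y, \star_Y, 0_Y)$, I would let $X \vee Y$ be the wedge of underlying pointed sets (so $X \sqcup Y$ with $0_X$ identified with $0_Y$ yielding a single basepoint $0$) and define a hyperoperation $\star$ by: $a \star b = a \star_X b$ if both $a,b$ lie in the image of $X$; $a \star b = a \star_Y b$ if both lie in the image of $Y$; and $a \star b = \{~\}$ (the empty subset of $X \vee Y$) whenever $a \in X \setminus \{0\}$ and $b \in Y \setminus \{0\}$, or vice versa. The first two rules agree at $0$ since it lies in both summands, so this is well-defined. Commutativity of $\star$ is inherited from $\star_X, \star_Y$ and the symmetry of the mixed case, and the weak-identity condition $a \in a \star 0$ reduces to the same condition in whichever summand contains $a$. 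The canonical pointed inclusions $X \to X \vee Y$ and $Y \to X \vee Y$ are plasma morphisms by construction.

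The universal property then goes through routinely. Given plasma morphisms $f \colon X \to Z$ and $g \colon Y \to Z$, the unique pointed function $h \colon X \vee Y \to Z$ restricting to $f$ and $g$ is automatically a plasma morphism: the containment $h(a \star b) \subseteq h(a) \star_Z h(b)$ holds on each summand by hypothesis and reduces to the trivial inclusion $\{~\} \subseteq h(a) \star_Z h(b)$ in the mixed case. The only conceptual obstacle is the mixed-case definition: one might hope to place some nontrivial subset of $X \vee Y$ there, but the universal property forces $a \star b$ to map into $f(a) \star_Z g(b)$ as $Z$ varies over all cocones, and since these targets can be arbitrary subsets, $\{~\}$ is the only choice that works universally. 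Once this is settled, verification of the plasma axioms and the universal property is straightforward, and arbitrary finite coproducts follow by induction.
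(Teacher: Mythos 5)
Your construction is exactly the one the paper uses: the wedge sum of the underlying pointed sets, with the hyperoperation restricting to the given ones on each summand and returning the empty family on mixed pairs. The paper leaves the verification as an exercise, and your fleshing out of the initial object, the well-definedness at the basepoint, and the universal property is correct.
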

	
	\begin{proof}
		It is an exercise in basic category theory to check that if $(P,\star,0)$ and $(Q,\boxplus,e)$ are plasmas then their coproduct in $Plas$ is the wedge sum of pointed sets (taking $0$ and $e$ as base points) $P\vee Q$ with the hyperoperation:\[x\vee y=\begin{cases}
			x\star y &\text{if }x,y\in P\\
			x\boxplus y &\text{if }x,y\in Q\\
			\varnothing & \text{otherwise}
		\end{cases}\]
		The new identity element of $P\vee Q$ is the equivalence class $\{0,e\}$ in the wedge sum.
	\end{proof}
	
	\section{Discrete Geometries are Free $\KK$-Modules}\label{sec:plasmasofgeometries}
	
	\begin{thm}\label{thm:ProjisK}
		Let $\mathbb{P}\{n\}$ denote the discrete combinatorial projective geometry on the set $\ul{n}=\{1,2,\ldots,n\}$. Then the plasma associated to $\mathbb{P}\{n\}$ by the embedding of \cite{nakamurareyes-mosaics,beardsleynakamura} is the $n$-fold coproduct $\vee_{n}\mathbb{K}$.
	\end{thm}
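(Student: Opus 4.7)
The plan is to unpack both sides of the claimed equality explicitly and exhibit a bijection that respects the hyperoperation. Since $\mathbb{P}\{n\}$ is the discrete geometry, every ``line'' through two distinct points $x,y \in \ul{n}$ contains only those two points, and there are no lines through a single point. Applying the embedding $\mathbb{P}\colon Proj \hookrightarrow Plas$ from \cite{beardsleynakamura}, the underlying set of the associated plasma is $\ul{n} \sqcup \{0\}$, where $0$ is the adjoined weak identity (acting as the basepoint). The hyperoperation, under the conventions of \cite{nakamurareyes-mosaics,beardsleynakamura}, should unfold as follows: $0 \star x \ni x$ (weak identity), $x \star x = \{0, x\}$ (because $x$ serves as its own inverse in the adjoined structure), and $x \star y = \varnothing$ for distinct $x, y \in \ul{n}$ (because the line through $x$ and $y$ contains no further points to contribute).

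Next, I would describe $\vee_n \KK$ explicitly using the coproduct formula from Proposition \ref{prop:plasmacoprod}. Writing the $i$-th copy of $\KK$ as $\{0, e_i\}$ with $e_i + e_i = \{0, e_i\}$, the wedge sum has underlying pointed set $\{0, e_1, e_2, \ldots, e_n\}$ with hyperoperation given by:
\[
e_i \star e_j = \begin{cases} \{0, e_i\} & \text{if } i = j \\ \varnothing & \text{if } i \neq j, \end{cases}
\]
and $0$ as the weak identity satisfying $0 \star e_i \ni e_i$. This is exactly the structure computed in the previous paragraph.

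I would then define the map $\varphi\colon \mathbb{P}(\mathbb{P}\{n\}) \to \vee_n \KK$ by $0 \mapsto 0$ and $i \mapsto e_i$ for $i \in \ul{n}$. This is plainly a bijection on underlying pointed sets. Checking that $\varphi$ is a plasma morphism (in fact, an isomorphism) amounts to matching the three cases of the hyperoperation described above with the three cases of the coproduct hyperoperation. All three match by direct comparison, and the inverse map is checked identically. The main step is step one, namely pinning down the image of the embedding $\mathbb{P}$ on the discrete geometry; once that computation is made explicit, the remainder of the proof is a short case check, and no subtle issues with the coproduct structure of Proposition \ref{prop:plasmacoprod} intervene because the discrete geometry has no lines relating distinct ``summands.''
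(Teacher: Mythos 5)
Your proposal is correct and follows essentially the same route as the paper: unwind the embedding to get the explicit hyperoperation ($x\star x=\{0,x\}$, $x\star y=\varnothing$ for $x\neq y$, with $0$ adjoined as weak identity) and match it against the coproduct formula of Proposition \ref{prop:plasmacoprod}. The paper is merely more explicit about the intermediate steps of the construction (the Faure--Fr\"olicher collinearity hyperoperation, the closure operator, and the passage to a simple pointed matroid) where you assert the final form of the hyperoperation directly.
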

	
	\begin{proof}
		We trace through the constructions of \cite{faure-frolicher,nakamurareyes-mosaics}. The discrete projective geometry on the set $\ul{n}=\{1,\ldots,n\}$ is the one whose ternary collinearity relation $\ell\in\ul{n}\times\ul{n}\times\ul{n}$  (as in \cite[Definition 2.1.1]{faure-frolicher}) is given by $(x,y,z)\in\ell$ exactly when $|\{x,y,z\}|\leq 2$. In other words, given a pair of points $x,y\in\ul{n}$ we have that $x$ and $y$ are on a unique line that contains no other points. By \cite[Proposition 2.2.3]{faure-frolicher}, this is equivalent data to a hyperoperation $\star_{\ell}\colon \ul{n}\times\ul{n}\to\mathcal{P}(\ul{n})$ given by \[ 
		x\star_{\ell}y=\begin{cases}
			\{z\in\ul{n}:(z,x,y)\in\ell\}&\text{if }x\neq y	\\
			\{x\}&\text{if }x=y
		\end{cases}
		\] For $x\neq y\in\ul{n}$ it follows that $x\star_{\ell}y=\{x,y\}$. This hyperoperation is \textit{not} the one which allows projective geometries to be embedded in plasmas. 
		
		From \cite[Proposition 2.3.3]{faure-frolicher} we have a closure operator on $\ul{n}$, denoted $C_\ell\colon\mathcal{P}(\ul{n})\to\mathcal{P}(\ul{n})$ associated to $\ell$ which makes $\ul{n}$ into a simple matroid. In this case, $C_\ell$ is the identity function since every subset of $\ul{n}$ is a ``subspace.'' To use the equivalence of \cite{nakamurareyes-mosaics}, we must pass to simple \textit{pointed} matroids. This is the functor which simply appends an additional point $\{0\}$ and defines a new closure operator $C_\ell^+\colon\mathcal{P}(\ulp{n})\to\mathcal{P}(\ulp{n})$ by setting $C_\ell^+(A)=C_\ell(A)\cup\{0\}$ and $C_\ell^+(0)=\varnothing$ (in other words, we freely adjoin a so-called ``loop'' to the matroid structure of $\ul{n}$). 
		
		Finally, following \cite[Section 4.3]{nakamurareyes-mosaics}, we obtain a plasma (in fact, a slightly less general object that they refer to as a \textit{mosaic}) from $(\ulp{n},C_\ell^+)$ by taking $0$ to be the additive identity and taking \[x\boxplus_\ell y=\begin{cases}
		\varnothing & \text{if }x\neq y\\
						 \{x,0\} & \text{if }x=y
		\end{cases}\]
		In light of Proposition \ref{prop:plasmacoprod}, this is clearly the definition of the coproduct plasma $\vee_{n}\mathbb{K}$.
	\end{proof}
	
	\begin{rmk}\label{rmk:twopresentationsoftheprojectivepoint}
		Perhaps it's worth pointing out that, in the case that $n=1$, this result has some sensible intuitive basis. A common way of constructing hyperrings is to start with a commutative ring $R$ and a subgroup $G\subseteq R^\times$ of the group of units of $R$. The set-theoretic quotient $R/G$ still has a well-defined multiplication given on cosets by $rG\cdot sG=rsG$, but its addition no longer makes sense. Instead, the addition becomes a (associative, commutative, distributive over multiplication) hyperoperation in a very natural way by defining \[rG\boxplus sG=\{u+v:u\in rG,~v\in sG\}/G\] In other words, to add two cosets, we take all possible sums of all possible representatives of each, and then quotient by $G$ again. This is clearly a subset of $R/G$. Applying this construction to any field with more than two elements, say $\mathbb{C}$, and its group of units, results in the Krasner hyperfield. Therefore, we might think of $\KK$ as being the natural algebraic structure of, e.g., the complex projective point $\mathbb{C}/\mathbb{C}^\ast$, where we have not forgotten the existence of an additive identity.
		
		Alternatively, the proof of Theorem \ref{thm:ProjisK} gives a concrete construction of $\KK$ from the one-point geometry. We begin with one point and the trivial collinearity relation. We then take the associated \textit{matroid} and notice that it's simple. Finally, we freely adjoin a ``basepoint'' to this matroid, to get a simple \textit{pointed} matroid. This final construction does not lose any information about the matroid and can be reversed by simply deleting the appended basepoint. The the results of \cite{nakamurareyes-mosaics} tell us that simple pointed matroids map faithfully into plasmas. However, when we restrict to the simple pointed matroids that arise from geometries, this is a fully faithful inclusion. In other words, there is no difference, in terms of information content, between combinatoriaal projective geometries and the class of plasmas associated to them. Therefore this is another way in which we can think of $\KK$ as a projective point, so it perhaps not completely surprising that the two constructions coincide. 
	\end{rmk}

	\begin{rmk}\label{rmk:myKisnotalgebraic}
		In \cite[Proposition 3.1]{connesconsani-hyperringofadeles}, Connes and Consani show that combinatorial projective geometries with at least four points per line can be encoded as hyperring extensions of the Krasner hyperfield. Their construction is generalized in \cite{nakamurareyes-mosaics} to associate mosaics to a much more general class of projective geometries. Theorem \ref{thm:ProjisK} is essentially a computation with this machinery, showing that the discrete geometries on sets with $n$-elements, which one might think of as the ``free'' projective geometries on such sets, correspond to $n$-dimensional vector spaces over $\KK$. It's worth noting, however, that the $\fun$-module associated to $\vee_n\KK$ by the embedding of \cite{beardsleynakamura} is not an $\fun$-\textit{algebra} in any obvious way and probably cannot be thought of as an extension of the hyperfield $\KK$.
	\end{rmk}
	
	We now record a related result that is not immediately relevant but that we will use later.
	
		\begin{prop}\label{prop:P(n)toVKfactorsthroughsummand}
		The inclusion \[\vee_{k}Plasm(\mathcal{P}(n),\KK)\hookrightarrow Plasm(\mathcal{P}(n),\vee_k\KK)\] given by composing with the $k$ different inclusions $\KK\hookrightarrow \vee_k\KK$, is an isomorphism.
	\end{prop}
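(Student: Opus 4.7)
The plan is to prove the map is a bijection by showing any plasma morphism $g\colon \mathcal{P}(n) \to \vee_k\KK$ must either be identically zero or factor through exactly one summand inclusion $\iota_i\colon \KK \hookrightarrow \vee_k\KK$. For injectivity, a nonzero $f\colon \mathcal{P}(n)\to\KK$ takes some element to $1\in\KK$, so $\iota_i\circ f$ has image meeting $\KK_i\setminus\{0\}$. Since the sets $\KK_i\setminus\{0\}$ are pairwise disjoint in the wedge, different choices of index produce distinct morphisms, and the wedge identification on the left-hand side collapses precisely the zero morphisms, matching the single constant-zero morphism on the right.

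The core of surjectivity rests on a single observation about the plasma $\vee_k\KK$ from Proposition~\ref{prop:plasmacoprod}: whenever $x,y \in \vee_k\KK$ are both nonzero and lie in different summands, $x \vee y = \varnothing$. Combining this with the morphism axiom $\{g(a\cup b)\} \subseteq g(a)\vee g(b)$ for disjoint $a,b \in \mathcal{P}(n)$ (using that $a\star b = \{a\cup b\}$ in the power set plasma of Example~\ref{example:powersetplasma}), I conclude that if $a\cap b=\varnothing$ and $g(a)$, $g(b)$ are both nonzero, they must share a summand. Applying this to the pairwise disjoint singletons $\{1\}, \ldots, \{n\}$ forces all nonzero values $g(\{j\})$ to lie in a single summand $\KK_i$.

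I would finish by inducting on $|a|$: writing $a = \{j\}\cup(a\setminus\{j\})$ for some $j\in a$ and using $g(a) \in g(\{j\}) \vee g(a\setminus\{j\})$ together with the closure of the hyperoperation on $\KK_i\cup\{0\}$ within that subset of $\vee_k\KK$, one concludes $g(a)\in\KK_i\cup\{0\}$ for all $a$, so $g$ factors through $\iota_i$. In the degenerate case where every singleton maps to $0$, the identity $\{0\}\vee x = \{x\}$ in $\vee_k\KK$ makes the induction collapse each $g(a)$ to $g(a\setminus\{j\})$, and iterating down yields $g\equiv 0$, which is the basepoint of the wedge on the left. The main subtlety is precisely this base case: one must rule out the possibility that a morphism vanishing on all singletons still produces nonzero values on compound subsets, and handling this correctly requires care with the iterated decomposition of subsets into singletons and with the identification $\{0\}\vee\{0\}=\{0\}$ in the coproduct plasma.
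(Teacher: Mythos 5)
Your proof is correct, but it takes a genuinely different route from the paper's. The paper argues by contradiction at the level of two arbitrary sets: assuming $f(A)=1$ and $f(B)=2$ land in different summands, it decomposes $A$ and $B$ into $A\setminus B$, $A\cap B$, $B\setminus A$, pins down the value of $f$ on each piece, and derives a contradiction from the disjoint pair $A\setminus B$, $B\setminus A$; it carries this out only for $k=2$ and waves at the general case. You instead localize the problem to the pairwise disjoint singletons $\{1\},\ldots,\{n\}$, use the fact that nonzero elements of distinct summands have empty hypersum to force all nonzero singleton values into one summand $\KK_i$, and then propagate upward by induction on $|a|$ via $g(a)\in g(\{j\})\vee g(a\setminus\{j\})$, using that the hyperoperation restricted to a single summand is the (never-empty) $\KK$-operation. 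Your approach buys a uniform treatment of all $k$ at once and an explicit identification of which summand $g$ factors through (it is determined by the singleton values); the paper's is shorter and avoids induction. You are also right to flag the all-singletons-zero case as the delicate point, and your resolution via $\{0\}\vee x=\{x\}$ is sound: the containment $\{g(a)\}\subseteq \{g(a\setminus\{j\})\}$ collapses $g$ to the zero morphism, which is exactly the wedge point on the left-hand side. The only things worth writing out more carefully in a final version are (i) that "lying in the same summand" is transitive on nonzero elements, so the pairwise singleton constraints really do yield a single $\KK_i$, and (ii) the trivial half of injectivity, namely that $\iota_i\circ f=\iota_i\circ f'$ forces $f=f'$ because $\iota_i$ is injective as a function.
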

	
	\begin{proof}
		We prove the case that $k=2$, as the other cases are essentially the same but involve more indices. Suppose $f\colon \mathcal{P}(n)\to\KK\vee\KK=\{0,1,2\}$ is a plasma morphism that does not factor through $\{0,1\}$ or $\{0,2\}$. Then there are some $A\subseteq\ul{n}$ and $B\subseteq \ul{n}$ such that $f(A)=1$ and $f(B)=2$. Suppose that $A\cap B=\varnothing$. Then $f(A\cup B)\subseteq 1+2=\varnothing$. But $A\cup B\in\mathcal{P}(n)$ so $f(A\cup B)\in \KK\vee \KK$. Thus it cannot be that $A\cap B=\varnothing$, and of course it cannot be true that $A=B$ either.
		Now we note the following:
		\begin{align*}
			2=f(B)&=f((B\setminus A)\cup (B\cap A))\\
			&\subseteq f(B\setminus A)+f(B\cap A)
		\end{align*}
		Therefore one of $f(B\setminus A)$ and $f(B\cap A)$ must be $2$. Suppose it is $f(B\cap A)$. Then
		\begin{align*}
			1=f(A)&=f(A\setminus B\cup (A\cap B))\\
			&\subseteq f(A\setminus B)+f(A\cap B)\\
			&=f(A\setminus B)+2
		\end{align*}
		 But this cannot be the case, as $x+2$ is either $\{2\}$, $\{0,2\}$ or empty for any $x\in \KK\vee\KK$. Therefore it must be the case that $f(B\cap A)=0$, $f(A\setminus B)=1$ and $f(B\setminus A)=2$. But then $f((A\setminus B)\cup (B\setminus A))\subseteq 1+2=\varnothing$, which is a contradiction. Therefore $f$ must factor through either the left or right coordinate.
	\end{proof}

	\section{Dynkin Systems as Kernels}\label{sec:Dynkinsaskernels}
	
	In this section we show that the data of what we call a KZ-system on $\{1,2,\ldots,n\}$, which we will see is always the zero set of a plasma morphism $\mathcal{P}(n)\to\mathbb{K}$, is equivalent data to a Dynkin system on $\{0,1,2,\ldots,n\}$. For an infinite set $S$, KZ-systems are equivalent to pre-Dynkin systems. It will follow that (pre-)Dynkin systems can be identified with plasma morphisms to $\KK$ by taking kernels.
	
	\begin{defn}
		Let $S$ be a set and $X\subseteq\mathcal{P}(S)$. Then we say that $X$ is a KZ-system if:
		\begin{enumerate}
			\item $\varnothing\in X$.
			\item For any $A,B\in X$, if $A\cap B=\varnothing$ then $A\cup B\in X$.
			\item If $A\in X$ and $B\notin X$ and $A\cap B=\varnothing$ then $A\cup B\notin X$.
		\end{enumerate}
		Given a set $S$, write $KZ(S)$ for the set of $KZ$-systems on $S$.
	\end{defn}
	
	\begin{prop}
		There is a bijection $\kappa\colon Plas(\mathcal{P}(n),\mathbb{K})\to KZ(\{1,\ldots, n\})$ given by $\kappa(f)=f^{-1}(0)$.
	\end{prop}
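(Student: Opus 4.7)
The plan is to use the fact that $\KK = \{0,1\}$ has only two elements, so any function $f \colon \mathcal{P}(n) \to \KK$ is uniquely determined by its fiber $f^{-1}(0)$. This immediately gives injectivity of $\kappa$ for free, and reduces the proof to two verifications: (i) that $\kappa(f)$ is indeed a KZ-system for every plasma morphism $f$, and (ii) that every KZ-system $K$ arises in this way, via the ``indicator-flip'' function $\chi_K \colon \mathcal{P}(n) \to \KK$ sending $A$ to $0$ if $A \in K$ and to $1$ otherwise.

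For well-definedness, I would unpack the plasma morphism condition $f(a \star b) \subseteq f(a) + f(b)$ using the power set hyperoperation from Example \ref{example:powersetplasma}. When $a$ and $b$ meet, $a \star b = \{~\}$ so the condition is vacuous. When $a \cap b = \varnothing$, it becomes the pointwise statement $f(a \cup b) \in f(a) + f(b)$. Writing $K = f^{-1}(0)$: axiom (1) holds because $f$ is pointed; axiom (2) holds because $A, B \in K$ disjoint forces $f(A \cup B) \in 0+0 = \{0\}$; axiom (3) holds because $A \in K$ and $B \notin K$ disjoint forces $f(A\cup B) \in 0+1 = \{1\}$, so $A \cup B \notin K$.

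For surjectivity, I would reverse this correspondence axiom by axiom. Given a KZ-system $K$, define $\chi_K$ as above; it sends $\varnothing$ to $0$ by KZ axiom (1). To verify the plasma morphism condition, the non-disjoint case is again vacuous, so it suffices to check $\chi_K(A \cup B) \in \chi_K(A) + \chi_K(B)$ for disjoint $A, B$. The three cases (both in $K$, exactly one in $K$, neither in $K$) are handled respectively by KZ axioms (2) and (3), together with the observation that $1+1 = \{0,1\}$ makes the third case automatic regardless of whether $A \cup B \in K$. Then $\chi_K^{-1}(0) = K$ by construction, so $\kappa(\chi_K) = K$.

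There is no real obstacle here: the statement is essentially an axiom-by-axiom matching, and the only subtle point is recognizing that the Krasner relation $1+1 = \{0,1\}$ is precisely what frees the KZ axioms from needing a fourth condition governing unions of two non-members. It may be worth remarking explicitly that this observation is what makes the KZ axioms ``minimal'' in the sense alluded to before the statement.
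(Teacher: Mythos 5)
Your proposal is correct and takes essentially the same approach as the paper: both arguments observe that a function into the two-element set $\KK$ is determined by its zero fiber, and then match the plasma-morphism condition to the KZ axioms case by case using $0+0=\{0\}$, $0+1=\{1\}$, and the fact that $1+1=\{0,1\}$ imposes no constraint on unions of two non-members. Your explicit remark about why no fourth axiom is needed is a nice gloss on the paper's closing sentence, but the substance is identical.
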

	
	\begin{proof}
		Let $f\colon \mathcal{P}(n)\to\mathbb{K}$ be a function so that $f^{-1}(0)$ is a family of subsets of $\{1,2,\ldots, n\}$. Clearly $\varnothing\in f^{-1}(0)$ if and only if $f(\varnothing)=0$. Let $A,B\in\mathcal{P}(n)$ with $A\cap B=\varnothing$. Because $0+0=\{0\}$ in $\mathbb{K}$ and $0+1=1+0=\{1\}$, $f$ satisfies the remaining conditions of being a plasma morphism if and only if $f^{-1}(0)$ is a KZ-system. Specifically, if $f(A)=f(B)=0$ then $A\cup B\in f^{-1}(0)$ exactly when $f(A\cup B)\subseteq f(A)+f(B)$. If $f(A)=1$ and $f(B)=0$ then $f(A\cup B)=1$ is equivalent to requiring $f(A\cup B)\subset f(A)+f(B)=\{1\}$. The case that $f(A)=f(B)=1$ induces no restrictions at all.
	\end{proof}
	
	\begin{thm}\label{thm:KZ to prelambda}
		Let $S$ be a set. Then there is a bijection between $KZ$-systems on $S$ and pre-Dynkin-systems on $S_+=S\cup\{0\}$ \[\Phi\colon KZ(S)\to \Dynk^{pre}(S_+)\]
		given by the formula \[\Phi(X)=X\cup X^\perp_+\] where $X^\perp_+=\{S_+\setminus A:A\in X\}$. The inverse function is given by $\Psi(Y)=\{A\in Y:0\notin A\}$.
	\end{thm}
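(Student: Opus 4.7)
The plan is to verify the four things needed: that $\Phi$ lands in pre-Dynkin systems, that $\Psi$ lands in KZ-systems, and that $\Phi\circ\Psi$ and $\Psi\circ\Phi$ are identities. The geometric picture driving everything is that in $\Phi(X)=X\cup X_+^\perp$ the two pieces are automatically disjoint: elements of $X$ are subsets of $S$ (hence do not contain $0$), while elements of $X_+^\perp$ are complements in $S_+$ of such subsets (hence do contain $0$). This clean dichotomy makes the two compositions trivial to compute, so the real content is showing both sides are closed under the relevant operations.

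First I would verify $\Phi(X)\in\Dynk^{pre}(S_+)$. The axioms $\varnothing\in\Phi(X)$ and closure under $S_+$-complementation are immediate from the definition. For disjoint unions, let $A,B\in\Phi(X)$ with $A\cap B=\varnothing$. Since at most one of them can contain $0$, there are two cases. If neither contains $0$, then $A,B\in X$ and KZ-axiom (2) gives $A\cup B\in X\subseteq\Phi(X)$. The interesting case is when $A\in X_+^\perp$ (say $A=S_+\setminus C$ with $C\in X$) and $B\in X$. Disjointness of $A$ and $B$ combined with $B\subseteq S$ forces $B\subseteq C$. Because $0\in A\cup B$, showing $A\cup B\in\Phi(X)$ amounts to showing $S_+\setminus(A\cup B)=C\setminus B$ lies in $X$. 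But $C=B\sqcup(C\setminus B)$ is a disjoint decomposition with $C\in X$ and $B\in X$, so the contrapositive of KZ-axiom (3) yields $C\setminus B\in X$, as needed.

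Next I would check that $\Psi(Y)=\{A\in Y:0\notin A\}$ is a KZ-system on $S$. Here $\varnothing\in\Psi(Y)$ is clear, and closure under disjoint unions is inherited directly from the pre-Dynkin axiom because $0\notin A\cup B$ whenever $0\notin A,B$. For the asymmetric axiom (3), suppose $A\in\Psi(Y)$, $B\subseteq S$ with $B\notin\Psi(Y)$ (so $B\notin Y$, since $0\notin B$ automatically), $A\cap B=\varnothing$, and for contradiction $A\cup B\in\Psi(Y)\subseteq Y$. The complement $S_+\setminus(A\cup B)$ is then in $Y$ and is disjoint from $A\in Y$, so their union $S_+\setminus B$ is in $Y$, whence $B\in Y$ by complementation — contradicting $B\notin Y$. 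The main obstacle in the whole argument is really this small pair of symmetric dualities: turning KZ-axiom (3) into its contrapositive in the forward direction, and using $Y$'s complementation closure to extract the same axiom in the reverse direction.

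Finally, the two composition identities fall out of the partition observation at the start. For $\Psi\Phi$, removing sets containing $0$ from $X\cup X_+^\perp$ leaves exactly $X$. For $\Phi\Psi$, the pre-Dynkin system $Y$ splits as the sets in $Y$ not containing $0$ together with the sets in $Y$ containing $0$; the first piece is $\Psi(Y)$ and, because $Y$ is closed under $S_+$-complementation, the second piece equals $\{S_+\setminus A:A\in Y,\,0\notin A\}=\Psi(Y)_+^\perp$, so the union reconstructs $Y$.
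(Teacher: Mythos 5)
Your proof is correct and follows essentially the same route as the paper's: the forward direction hinges on the contrapositive of KZ-axiom (3) applied to the disjoint decomposition $C=B\sqcup(C\setminus B)$, and the reverse direction on complementing $A\cup B$ in $Y$ and re-complementing to recover $B$, exactly as in the paper. The only cosmetic difference is that you verify the two composite identities directly (and spell out $\Phi\Psi(Y)=Y$, which the paper leaves as ``not hard to check''), whereas the paper phrases the same computations as injectivity and surjectivity.
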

	
	\begin{proof}
		We first show that $\Phi(X)$ is a Dynkin system. By construction, $\Phi(X)$ is closed under complements and contains $\varnothing$. It remains to show that $\Phi(X)$ is closed under taking unions of pairs of disjoint sets. Let $A,B\in\Phi(X)$ and assume $A\cap B=\varnothing$. If $A,B\in X$ then, by assumption, $A\cup B\in X\subseteq \Phi(X)$. Moreover, it cannot be the case that $A$ and $B$ are both elements of $\Phi(X)-X$ since that would imply they both must contain $0$ and therefore be non-disjoint. Thus it suffices to prove the statement in the case that $A\in X$ and $B\in X_+^\perp$. 
		
		The fact that $B\in X^\perp_+$ implies that $B=(S\setminus D)\cup\{0\}$ for some $D\in X$. The assumption that $A\cap B=\varnothing$ implies that $A\cap(S\setminus D)=\varnothing$ and therefore that $A\subseteq D$. Hence it must be the case that $A\cup ((S\setminus A)\cap D)=D\in X$. But since $A\in X$ and $X$ is a KZ-system we must have that $(S\setminus A)\cap D$ is also an element of $X$ (since if it were not then $D$ would also not be in $X$, by the second axiom). Therefore, by construction, $S\setminus ((S\setminus A)\cap D)\cup\{0\}\in X_+^\perp\subseteq\Phi(X)$. But this last set is equal to $A\cup (S\setminus D)\cup\{0\}=A\cup B$. We have shown that $\Phi(X)$ is a pre-Dynkin system.
		
		We now check that $\Phi$ is injective and surjective. For injectivity, suppose that $\Phi(X)=\Phi(Y)$ for $X,Y\in KZ(S)$. Then $X=\{A\in\Phi(X):0\notin A\}=\{A\in\Phi(Y):0\notin A\}=Y$. For surjectivity, let $Y\in \Dynk^{pre}(S_+)$ and define $\Psi(Y)=\{A\in Y:0\notin Y\}$. We first show that $\Psi(Y)$ is a KZ-system.
		
		It is clear that $\Psi(Y)$ contains $\varnothing$ and that it is closed under union of disjoint elements. It remains to show that if two sets are disjoint and one of them is \textit{not} in $\Psi(Y)$ then their union is not in $\Psi(Y)$. Suppose that $A,B\subseteq S$, $A\in \Psi(Y)$, $B\notin\Psi(Y)$ and $A\cap B=\varnothing$. Suppose that $A\cup B\in\Psi(Y)$. Then since $\Psi(Y)\subseteq Y$, it must be that $A\cup B \in Y$. Since $Y$ is closed under complements, $((S\setminus A)\cap (S\setminus B))\cup\{0\}$ is also in $Y$. This last set is disjoint from $A$, so $A\cup ((S\setminus A)\cap (S\setminus B))\cup\{0\}=(S\setminus B)\cup\{0\}$ is in $Y$. It follows that $B=S_+\setminus((S\setminus B)\cup\{0\})\in Y$, again by closure under complements, and therefore $B\in \Psi(Y)$. But this is a contradiction, so it cannot be the case that $A\cup B\in \Psi(Y)$. Thus $\Psi(Y)$ is a KZ-system on $S$. It is not hard to check that $\Phi\Psi(X)=X$.
	\end{proof}
	
	\begin{cor}\label{cor:dynkinsystemsarekernels}
		For any set $S$ there is a bijection $Plas(\mathcal{P}(S),\mathbb{K})\to \Dynk^{pre}(S_+)$ given by $f\mapsto f^{-1}(0)\cup (f^{-1}(0)^\perp_+)$. If $S$ is finite this clearly induces a bijection $Plas(\mathcal{P}(S),\mathbb{K})\cong \Dynk(S_+)$.
	\end{cor}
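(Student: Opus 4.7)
The plan is to obtain the bijection by composing the two bijections established immediately before the corollary. First, the proposition characterizing $\kappa\colon Plas(\mathcal{P}(n),\mathbb{K})\to KZ(\{1,\ldots,n\})$ as $f\mapsto f^{-1}(0)$ is stated for finite $n$, but its proof uses only the local defining conditions of a plasma morphism and the hyperaddition table of $\KK$; I would first observe that the same argument, applied set-theoretically element-by-element, yields a bijection
\[
\kappa\colon Plas(\mathcal{P}(S),\KK)\longrightarrow KZ(S)
\]
for any set $S$, sending $f$ to $f^{-1}(0)$.

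Next I would invoke Theorem \ref{thm:KZ to prelambda}, which gives a bijection
\[
\Phi\colon KZ(S)\longrightarrow \Dynk^{pre}(S_+),\qquad \Phi(X)=X\cup X^\perp_+.
\]
Composing, the map $\Phi\circ\kappa$ is a bijection $Plas(\mathcal{P}(S),\KK)\to\Dynk^{pre}(S_+)$ whose formula is, by direct substitution, exactly $f\mapsto f^{-1}(0)\cup(f^{-1}(0))^\perp_+$, which is the displayed assignment.

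For the second statement, I would use the remark following Definition \ref{def:dynkinsystems} noting that when $S$ is finite (equivalently $S_+$ is finite), axiom (d) of a Dynkin system is vacuous beyond finite disjoint unions, so $\Dynk^{pre}(S_+)=\Dynk(S_+)$. Hence the composite bijection specializes in the finite case to $Plas(\mathcal{P}(S),\KK)\cong\Dynk(S_+)$.

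There is no real obstacle here: the content was already absorbed by the previous proposition and Theorem \ref{thm:KZ to prelambda}, and the only mildly delicate point is noting that $\kappa$ is defined and bijective for arbitrary $S$ and not only for finite sets; since the axioms for a plasma morphism $\mathcal{P}(S)\to\KK$ are checked pointwise on pairs $(A,B)$ of subsets of $S$, nothing about finiteness of $S$ is used in that verification.
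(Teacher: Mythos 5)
Your proposal is correct and matches the paper's (implicit) argument exactly: the corollary is obtained by composing the kernel bijection $\kappa$ with the bijection $\Phi$ of Theorem \ref{thm:KZ to prelambda}, and your observation that $\kappa$ extends verbatim to arbitrary $S$ because the plasma-morphism conditions are checked pairwise on subsets is precisely the point the paper relies on when it asserts the infinite case. The finite-set reduction of pre-Dynkin to Dynkin systems is likewise the same remark the paper makes after Definition \ref{def:dynkinsystems}.
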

	
	\begin{rmk}
			One place (pre-)Dynkin systems appear is in probability theory, especially, it seems, in so-called quantum probability theory \cite{suppes-probabilisticquantum,gudder-quantumprobabilityspaces}. I know little to nothing about probability theory, quantum or otherwise, but I wonder if there is a probabilistic interpretation of these functions $\mathcal{P}(S)\to\KK$. Na\"ively, one might note that we are associating either a $0$ or $1$ to each subset of $S$ which, in certain contexts, can be thought of as a kind of probability measure, specifically a ``two-valued probability measure,'' though it is \textit{subadditive}, in a certain sense, rather than additive. Indeed, its defining formulas are not totally unlike the definition of an ``upper probability measure'' in, for instance, Section 2 of \cite{walley-frequentist}.
	\end{rmk}
	
	Using Proposition \ref{prop:Hiscorepresented}, we now know that the functor \[Plas(\mathcal{P}(-\setminus \{0\}),\KK)\cong\widehat{H}\KK\colon \Finp\to\Setp\] agrees with $\Dynk$ on objects, but it remains to check that they are isomorphic as functors. It suffices to show naturality of the above isomorphisms.
	
	\begin{prop}\label{prop:naturality of Phi}
		For any pointed function $\phi\colon\ulp{n}\to\ulp{m}$, the following square commutes\[
		\begin{tikzcd}
			Plas(\mathcal{P}(n),\KK)\ar[d,"{Plas(\mathcal{P}(\phi),\mathbb{K})}",swap] \ar[r, "\Phi_{\ulp{n}}"] & \Dynk(\ulp{n})\ar[d,"\Dynk(\phi)"]\\
			Plas(\mathcal{P}(m),\KK)\ar[r,"\Phi_{\ulp{m}}",swap] & \Dynk(\ulp{m})
		\end{tikzcd}\]
		with $Plas(\mathcal{P}(\phi),\KK)$ denoting the function that precomposes a plasma morphism with $\phi^{-1}\colon\mathcal{P}(m)\to\mathcal{P}(n)$.
	\end{prop}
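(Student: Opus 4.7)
The strategy is to check commutativity of the square elementwise: given $f \in Plas(\mathcal{P}(n),\KK)$, one shows that $\Dynk(\phi)(\Phi_{\ulp{n}}(f))$ and $\Phi_{\ulp{m}}(f\circ \mathcal{P}(\phi))$ coincide as subsets of $\mathcal{P}(\ulp{m})$. Both are Dynkin systems on $\ulp{m}$ (the top composite by functoriality of $\Dynk$, the bottom by Theorem \ref{thm:KZ to prelambda}), hence both are closed under complementation in $\ulp{m}$. It therefore suffices to show that the two Dynkin systems contain exactly the same sets $B \subseteq \ulp{m}$ satisfying $0 \notin B$, i.e.~the same subsets of $\ul{m}$.

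For such a $B \subseteq \ul{m}$, the hypothesis that $\phi$ is pointed forces $0 \notin \phi^{-1}(B)$, so $\phi^{-1}(B)$ is a legitimate element of $\mathcal{P}(n) = \mathcal{P}(\ul{n})$. Unwinding the top route, $B \in \Dynk(\phi)(\Phi_{\ulp{n}}(f))$ iff $\phi^{-1}(B) \in \Phi_{\ulp{n}}(f) = f^{-1}(0)\cup (f^{-1}(0))^\perp_+$; since every element of $(f^{-1}(0))^\perp_+$ contains $0$ while $\phi^{-1}(B)$ does not, this collapses to $\phi^{-1}(B) \in f^{-1}(0)$, i.e.~$f(\phi^{-1}(B)) = 0$. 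Unwinding the bottom route, $B \in \Phi_{\ulp{m}}(f\circ\mathcal{P}(\phi))$ iff $B \in (f\circ\mathcal{P}(\phi))^{-1}(0)$, the complement piece being excluded for the same reason, which is exactly $f(\phi^{-1}(B)) = 0$. The two conditions coincide, so the square commutes.

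The main bookkeeping obstacle is juggling the four power sets $\mathcal{P}(\ul{n})$, $\mathcal{P}(\ulp{n})$, $\mathcal{P}(\ul{m})$, $\mathcal{P}(\ulp{m})$ together with two notions of complementation (in $\ulp{\bullet}$ versus in $\ul{\bullet}$), and making sure the maps $\mathcal{P}(\phi)$ and $\phi^{-1}$ in the statement are identified consistently. The organizing principle is that pointedness of $\phi$ makes $\phi^{-1}$ restrict to a map $\mathcal{P}(\ul{m}) \to \mathcal{P}(\ul{n})$, and the splitting of a Dynkin system on $\ulp{m}$ into its "$0$-free" and "$0$-containing" halves is precisely the combinatorial content of the construction $\Phi$ from Theorem \ref{thm:KZ to prelambda}; once one exploits this splitting, the bookkeeping collapses to the single condition $f(\phi^{-1}(B)) = 0$.
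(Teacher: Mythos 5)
Your proof is correct and follows essentially the same route as the paper's: an elementwise unwinding of both composites, with everything reducing to the condition $f(\phi^{-1}(B))=0$ on the $0$-free sets. The only difference is organizational --- you dispatch the sets containing $0$ abstractly by noting both sides are complement-closed Dynkin systems, whereas the paper verifies that case by an explicit chain of preimage identities; both are fine.
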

	\begin{proof}
		Let $f\colon \mathcal{P}(n)\to\KK$ be a morphism of plasmas. The function $Plas(\mathcal{P}(\phi),\KK)$ applied to $f$ gives the function $(\phi^{-1})_\ast(f)\colon \mathcal{P}(m)\to\KK$ defined by $U\mapsto f(\phi^{-1}(U))$. The associated element of $\Dynk(\ulp{m})$ comprises subsets $U\subseteq\ulp{m}\setminus\{0\}$ with $f(\phi^{-1}(U))=0$  along with their complements $W=\ulp{m}\setminus U$ (which necessarily contain $0$). These last are precisely the subsets $W\subseteq\ulp{m}$ such that \[f(\phi^{-1}(\ulp{m}\setminus W))=f(\ulp{n}\setminus\phi^{-1}(W))=f(\ul{n}\setminus(W\setminus\{0\}))=0\]
		
		On the other hand $\Phi_{\ulp{n}}(f)$, the element of $\Dynk(\ulp{n})$ associated to $f$, is composed of each set $V\subseteq\ulp{n}$ with $f(V)=0$ and its complement $\ulp{n}\setminus V$. The Dynkin system associated to this by $\Dynk(\phi)$ is the collection of subsets $W\subseteq\ulp{m}$ such that $\phi^{-1}(W)\in\Phi_{\ulp{n}}(f)$. These are the sets $W$ such that either $f(\phi^{-1}(W))=0$ in the case that $0\notin W$, or \[f(\phi^{-1}(\ulp{m}\setminus W))=f(\ulp{n}\setminus \phi^{-1}(W))=f(\ul{n}\setminus (\phi^{-1}(W\setminus\{0\})))=0\] in the case that $0\in W$.		
	\end{proof}

	Any $\fun$-module $X$ has a delooping $BX\colon\Delta^{\op}\to\Set_\ast$ as in \cite[Appendix A]{beardsleynakamura}. In the case that $X$ is $\widehat{H}A$ for $A$ an Abelian group, this is precisely the usual bar construction whose geometric realization is the classifying space of $A$. In this sense, the functor $B\widehat{H}$ properly generalizes the usual classifying space functor, or Segal's ``delooping'' functor \cite{segal}. In this language, the preceding results lead to the following results.
	
	\begin{thm}\label{thm:F1modofprojisDynk}
		The $\fun$-module of the discrete projective geometry on $n$-points is isomorphic to the $n$-fold coproduct of the Dynkin $\fun$-module with itself. 
	\end{thm}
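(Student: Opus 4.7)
The plan is to evaluate $\widehat{H}(\vee_n\KK)$ at an arbitrary $\ulp{m}\in\Finp$ and assemble a chain of natural isomorphisms identifying it pointwise with $(\vee_n\Dynk)(\ulp{m})$. By Theorem \ref{thm:ProjisK}, the plasma associated to the discrete projective geometry $\mathbb{P}\{n\}$ is the coproduct $\vee_n\KK$, and the $\fun$-module of $\mathbb{P}\{n\}$ is obtained by applying the fully faithful embedding $\widehat{H}$. It therefore suffices to produce a natural isomorphism $\widehat{H}(\vee_n\KK)\cong\vee_n\widehat{H}(\KK)=\vee_n\Dynk$ in $\mathrm{Mod}_\fun$. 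This is not a formality, since $\widehat{H}$ is a right adjoint and hence does not preserve coproducts in general; the real content is that plasma maps out of a free object $\mathcal{P}(m)$ detect the wedge decomposition on the target.

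I would build the isomorphism in three steps. First, Proposition \ref{prop:Hiscorepresented} supplies a natural isomorphism $\widehat{H}(\vee_n\KK)(\ulp{m})\cong Plas(\mathcal{P}(m),\vee_n\KK)$. Next, Proposition \ref{prop:P(n)toVKfactorsthroughsummand} rewrites the right-hand side as $\vee_n Plas(\mathcal{P}(m),\KK)$. Finally, Corollary \ref{cor:dynkinsystemsarekernels} together with a second use of Proposition \ref{prop:Hiscorepresented} identifies each summand with $\Dynk(\ulp{m})=\widehat{H}(\KK)(\ulp{m})$. Since coproducts in $\mathrm{Mod}_\fun=\Fun_\ast(\Finp,\Setp)$ are computed pointwise as wedge sums in $\Setp$, the resulting composite assembles into an isomorphism of $\fun$-modules, once naturality is in hand.

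The main obstacle is exactly that naturality check in the pointed variable $\ulp{m}$. The first and third steps are essentially free: Proposition \ref{prop:Hiscorepresented} is Yoneda-theoretic and therefore natural by construction, and the identification of $Plas(\mathcal{P}(-),\KK)$ with $\Dynk$ in the pointed variable is precisely the content of Proposition \ref{prop:naturality of Phi}. The remaining point is that the middle decomposition commutes with precomposition by $\mathcal{P}(\phi)$ for a pointed $\phi\colon\ulp{m}\to\ulp{m'}$. This should be immediate from associativity of composition: the decomposition is implemented by postcomposition with the $n$ summand inclusions $\iota_i\colon\KK\hookrightarrow\vee_n\KK$, and one has $(\iota_i\circ f)\circ\mathcal{P}(\phi)=\iota_i\circ(f\circ\mathcal{P}(\phi))$, so the bijection of Proposition \ref{prop:P(n)toVKfactorsthroughsummand} is natural in $\Finp$. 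Stringing the three natural isomorphisms together produces the desired isomorphism $\widehat{H}(\vee_n\KK)\cong\vee_n\Dynk$.
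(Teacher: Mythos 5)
Your proposal is correct and is essentially the paper's own proof: the paper simply cites the combination of Propositions \ref{prop:Hiscorepresented}, \ref{prop:naturality of Phi} and \ref{prop:P(n)toVKfactorsthroughsummand}, which is exactly the three-step chain you assemble. Your added remarks --- that $\widehat{H}$ is a right adjoint and so the wedge decomposition is genuinely carried by Proposition \ref{prop:P(n)toVKfactorsthroughsummand}, and that its naturality in $\ulp{m}$ follows from associativity of composition with the summand inclusions --- are accurate elaborations of what the paper leaves implicit.
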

	
	\begin{proof}
		This follows immediately from combining Propositions \ref{prop:Hiscorepresented}, \ref{prop:naturality of Phi} and \ref{prop:P(n)toVKfactorsthroughsummand}.
	\end{proof}
	
	\begin{cor}\label{cor:deloopingofKisDynk}
	The delooping of the Krasner hyperfield~$\KK$ is isomorphic to the delooping of the Dynkin $\fun$-module.
	\end{cor}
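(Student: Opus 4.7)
The plan is to derive this as an immediate specialization of Theorem \ref{thm:F1modofprojisDynk} to the case $n=1$, followed by functoriality of the delooping construction. First I would observe that when $n=1$, the $n$-fold wedge $\vee_n \KK$ degenerates to $\KK$ itself, so Theorem \ref{thm:ProjisK} identifies the plasma of the one-point discrete projective geometry $\mathbb{P}\{1\}$ with the Krasner hyperfield $\KK$. Consequently the $\fun$-module associated to $\mathbb{P}\{1\}$ via the embedding $\widehat{H}\colon Plas\hookrightarrow\mathrm{Mod}_{\fun}$ is nothing other than $\widehat{H}\KK$.

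Next I would invoke Theorem \ref{thm:F1modofprojisDynk} at $n=1$. Its right-hand side, the $1$-fold coproduct of the Dynkin $\fun$-module with itself, is just $\Dynk$ on the nose, so the theorem yields an isomorphism $\widehat{H}\KK\cong \Dynk$ in $\mathrm{Mod}_{\fun}$. In other words, all the assembly work (combining Proposition \ref{prop:Hiscorepresented}, the naturality of Proposition \ref{prop:naturality of Phi}, and the coproduct-splitting of Proposition \ref{prop:P(n)toVKfactorsthroughsummand}) has already been done in degree $n=1$ and exhibits the desired isomorphism of underlying $\fun$-modules.

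Finally, since the delooping construction $B\colon \mathrm{Mod}_{\fun}\to\Fun(\Delta^{\op},\Setp)$ of \cite[Appendix A]{beardsleynakamura} is a functor, it preserves isomorphisms and therefore sends the above identification to an isomorphism $B\widehat{H}\KK\cong B\Dynk$ of simplicial sets, which is precisely the claim of the corollary. There is no serious obstacle at this step: the genuine content is already packaged inside Theorem \ref{thm:F1modofprojisDynk}, and what remains is the purely formal observation that a $1$-fold coproduct is an identity operation and that $B$ is functorial.
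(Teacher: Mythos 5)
Your proposal is correct and matches the paper's intended derivation: the corollary is stated without proof as an immediate consequence of the isomorphism $\widehat{H}\KK\cong\Dynk$ (established by Propositions \ref{prop:Hiscorepresented} and \ref{prop:naturality of Phi}, which is also the $n=1$ case of Theorem \ref{thm:F1modofprojisDynk}, where the coproduct-splitting of Proposition \ref{prop:P(n)toVKfactorsthroughsummand} is vacuous), followed by functoriality of the delooping $B$. Your routing through Theorem \ref{thm:F1modofprojisDynk} at $n=1$ is the same argument in only slightly different packaging.
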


	\section{The Sub-$\fun$-module of Partitions}\label{sec:F1modofPartitions}
	
	Recall that by Dynkin's well-known $\pi$-$\lambda$-theorem (see \cite[A.1.4]{durrett-probability} or \cite[Lemma 1.1]{dynkinTheory}), a Dynkin system which is closed under finite intersections is a $\sigma$-algebra. Moreover every $\sigma$-algebra on a finite set is an atomic Boolean algebra, and the set of atoms of that algebra forms a partition of the finite set. Indeed, this induces a bijection between $\sigma$-algebras on finite sets and partitions of finite sets. In this section we see that the $\fun$-module of partitions is a proper submodule of that of Dynkin systems described above.
	
	We know immediately that there are proper subset inclusions $\Part(\ulp{n})\subseteq\Dynk(\ulp{n})$, so it only remains to check that the functions $\Dynk(\phi)\colon \Dynk(\ulp{n})\to\Dynk(\ulp{m})$ preserve the property of being intersection closed and, after passing isomorphically to partitions, recover the functions $\Part(\phi)$.
	
	\begin{lem}
		Suppose that $X\in\Dynk(\ulp{n})$ is closed under intersections, and therefore determines a $\sigma$-algebra, or equivalently a partition, on $\ulp{n}$. If $\phi\colon \ulp{n}\to\ulp{m}$ is a function of pointed sets then the Dynkin system $\Dynk(\phi)(X)\in\Dynk(\ulp{m})$ is also closed under intersections.
	\end{lem}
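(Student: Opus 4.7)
The statement is essentially immediate from the definition of $\Dynk(\phi)$ combined with the fact that preimages commute with intersections, so my plan is just to unwind these two facts in sequence. Recall that $\Dynk(\phi)(X) = \{A \subseteq \ulp{m} : \phi^{-1}(A) \in X\}$. To show intersection closure, I would take two arbitrary elements $A, B \in \Dynk(\phi)(X)$ and show $A \cap B \in \Dynk(\phi)(X)$.

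The plan is to start from the membership criterion $\phi^{-1}(A), \phi^{-1}(B) \in X$, apply the set-theoretic identity $\phi^{-1}(A \cap B) = \phi^{-1}(A) \cap \phi^{-1}(B)$, and then invoke the hypothesis that $X$ is closed under intersections to conclude $\phi^{-1}(A) \cap \phi^{-1}(B) \in X$. Rewriting this as $\phi^{-1}(A \cap B) \in X$ gives exactly the defining condition for $A \cap B \in \Dynk(\phi)(X)$.

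There is no real obstacle here; the whole content is that the pullback functor $\phi^{-1}\colon \mathcal{P}(\ulp{m}) \to \mathcal{P}(\ulp{n})$ is a morphism of meet-semilattices, so any subset of $\mathcal{P}(\ulp{n})$ closed under binary intersections pulls back to a subset of $\mathcal{P}(\ulp{m})$ closed under binary intersections. I would present this as a short direct verification of two or three lines, with no induction or case analysis needed. No auxiliary lemma or reference to the earlier machinery on KZ-systems is required.
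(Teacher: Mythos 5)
Your proposal is correct and matches the paper's proof essentially verbatim: both take $A,B\in\Dynk(\phi)(X)$, apply the identity $\phi^{-1}(A\cap B)=\phi^{-1}(A)\cap\phi^{-1}(B)$, and invoke intersection-closure of $X$. Nothing further is needed.
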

	
	\begin{proof}
		Let $A,B\in\Dynk(\phi)(X)$, i.e.~they are subsets of $\ulp{m}$ such that $\phi^{-1}(A)$ and $\phi^{-1}(B)$ are elements of $X$. Note that $\phi^{-1}(A\cap B)=\phi^{-1}(A)\cap\phi^{-1}(B)$ and therefore $\Dynk(\phi)(X)$ is closed under intersection whenever $X$ is.
	\end{proof}

	\begin{prop}\label{prop:partitionsubfunctor}
		For every $\phi\colon\ulp{n}\to\ulp{m}$ in $\Finp$, the following diagram commutes:
		\[\begin{tikzcd}
			{\Part(\ulp{n})} & {\Part(\ulp{m})} \\
			{\Dynk(\ulp{n})} & {\Dynk(\ulp{m})}
			\arrow["{\Part(\phi)}", from=1-1, to=1-2]
			\arrow[hook, from=1-1, to=2-1, "\Sigma", swap]
			\arrow[hook, from=1-2, to=2-2, "\Sigma"]
			\arrow["{\Dynk(\phi)}"', from=2-1, to=2-2]
		\end{tikzcd}\]
		in which the vertical arrows are given by freely generating a $\sigma$-algebra from the partition blocks (under intersection and union).
		
	\end{prop}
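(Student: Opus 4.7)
The plan is to check commutativity pointwise: given a partition $P=\{P_i\}$ of $\ulp{n}$, I would show that the two $\sigma$-algebras $D_1 := \Sigma\bigl(\Part(\phi)(P)\bigr)$ and $D_2 := \Dynk(\phi)\bigl(\Sigma(P)\bigr)$ coincide as subfamilies of $\mathcal{P}(\ulp{m})$. Here $D_1$ consists of all unions of blocks of the pushforward partition $Q := \Part(\phi)(P)$, while $D_2$ consists of all $A\subseteq\ulp{m}$ whose preimage $\phi^{-1}(A)$ is a union of blocks of $P$. The lemma preceding the proposition ensures $D_2$ is itself intersection-closed, so it does correspond to a partition, and the goal is precisely that this partition equals $Q$.

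For the containment $D_1\subseteq D_2$, since $D_2$ is closed under unions it suffices to check that each block $Q_j$ of $Q$ lies in $D_2$. I would use Lemma \ref{lem:inducedpartitionsuction} to write $\phi^{-1}(Q_j) = \bigcup\{P_i : \phi(P_i)\cap Q_j \neq \varnothing\}$: the inclusion $\supseteq$ is the lemma itself, and $\subseteq$ follows by taking, for each $x\in\phi^{-1}(Q_j)$, the unique block $P_i$ containing $x$ and observing $\phi(x)\in\phi(P_i)\cap Q_j$. This exhibits $\phi^{-1}(Q_j)$ as a union of $P$-blocks, so $Q_j\in D_2$.

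The harder inclusion is $D_2\subseteq D_1$, and this is where I expect the main obstacle. Given $A\in D_2$ and $y\in A$, I must show that the entire $Q$-block $Q(y)$ is contained in $A$. When $y\notin\im(\phi)$ the block is the singleton $\{y\}$ and there is nothing to prove. Otherwise I would invoke the explicit chain description from Remark \ref{rmk:explicitpartitionblocks}: any $z\in Q(y)$ is linked to $y$ by a finite sequence $y=y_0,y_1,\ldots,y_r=z$ with each consecutive pair $y_i,y_{i+1}$ lying in a common $\phi(P_{k_i})$. I would then induct on $i$: assuming $y_i\in A$, pick $x_i\in P_{k_i}$ with $\phi(x_i)=y_i$, so $x_i\in\phi^{-1}(A)$; since $\phi^{-1}(A)$ is a union of $P$-blocks, $P_{k_i}\subseteq\phi^{-1}(A)$, hence $\phi(P_{k_i})\subseteq A$, and in particular $y_{i+1}\in A$. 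This forces $Q(y)\subseteq A$, so $A$ is a union of $Q$-blocks, giving $A\in D_1$.

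Two small bookkeeping items remain: that $\Sigma$ sends the all-singletons partition to the full power set (so the diagram lives in $\Setp$), and that $\Sigma$ is injective, which is just the observation that the atoms of a finite $\sigma$-algebra recover the partition blocks. Neither is substantive, and together with the double containment above they complete the proof.
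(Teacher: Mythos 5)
Your proof is correct, but it is organized differently from the paper's. The paper factors $\phi$ as an injection composed with a surjection and verifies the commutativity separately in each case: for injections the pushforward partition is just $\{\phi(P_i)\}$ together with singletons off the image and no transitive closure is needed, while for surjections the paper writes $X=\cup_{i\in K}\phi(P_i)$ (using surjectivity) and then checks closure under transitivity by a disjointness contradiction. You instead handle a general $\phi$ uniformly: for $\Sigma(\Part(\phi)(P))\subseteq\Dynk(\phi)(\Sigma(P))$ you use Lemma \ref{lem:inducedpartitionsuction} to identify $\phi^{-1}(Q_j)$ exactly as $\bigcup\{P_i:\phi(P_i)\cap Q_j\neq\varnothing\}$, and for the reverse containment you run an induction along the chains of Remark \ref{rmk:explicitpartitionblocks}, propagating membership in $A$ from $y_i$ to $y_{i+1}$ via the fact that $\phi^{-1}(A)$ is a union of $P$-blocks. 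Both arguments lean on the same two ingredients (the lemma and the explicit chain description of $\Part(\phi)(P)$), but yours avoids the case split at the cost of a slightly more delicate induction, whereas the paper's factorization isolates the transitive-closure subtlety entirely in the surjective case where images of blocks cover $X$. I checked your chain induction and it is sound: the only point to be explicit about is that a $Q$-block meeting $\im(\phi)$ is contained in $\im(\phi)$, so the chain hypothesis genuinely applies to every $z$ in the block of $y$; your singleton case disposes of the complementary situation.
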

		
	\begin{proof}
		
		Let $P=\{P_i\}_{i\in I}$ be a partition of $\ulp{n}$. We need to show that $X\in \Sigma(\Part(\phi)(P))$ if and only if $X\in \Dynk(\phi)(\Sigma(P))$. Because every function factors as the composite of an injection and a surjection, it suffices to separately consider the case that $\phi$ is an injection and the case that $\phi$ is a surjection.
		
		Suppose that $\phi$ is an injection and note that, in this case, $\{\phi(P_i)\}_{i\in I}$ forms a partition of $\im(\phi)$. Appending the family $\{\{x\}:x\notin\im(\phi)\}$ gives the partition $\Part(\phi)(P)$ of $\ulp{m}$ (in other words, we need only take reflexive closure, following Remark \ref{rmk:explicitpartitionblocks}). Now let $X\in\Sigma(\Part(\phi)(P))$, i.e.~let $X$ be a union of partition blocks of $\Part(\phi)(P)$. Using this description of the blocks of $\Part(\phi)(P)$, we have that the preimage of $X$ is the union of a collection of $P_i$, the blocks of $P$. Therefore, by definition, $\phi^{-1}(X)\in\Sigma(P)$ and $X\in\Dynk(\phi)(\Sigma(P))$.
		
		Still supposing that $\phi$ is an injection, let $X\in\Dynk(\phi)(\Sigma(P))$, i.e.~$X\subseteq\ulp{m}$ and $\phi^{-1}(X)=\cup_{j\in J}P_j$ for some $J\subseteq I$. We are trying to assemble $X$ from sets of the form $\phi(P_i)$ and $\{x\}$ for $i\in I$ and $x\notin\im(\phi)$. Since we are free to add any subset of $\ulp{m}\setminus\im(\phi)$ (i.e.~any collection of singletons $\{x\}$ for $x\notin\im(\phi)$ can be added or removed without changing the preimage of $X$) we may assume that $X$ is contained in the image of $\phi$. Then we may write $X$ as $\cup_{j\in J}\phi(P_j)\in\Sigma(\Part(\phi)(P))$.
		
		Now suppose that $\phi$ is a surjection and let $X\in\Sigma(\Part(\phi)(P))$. Therefore $X$ can be written uniquely as a union of the disjoint blocks of $\Part(\phi)(P)$, each of which is a union of (not necessarily disjoint) sets of the form $\phi(P_i)$ (again, refer to Remark \ref{rmk:explicitpartitionblocks} for a more explicit description of this partition). Let us write $X=Q_1\cup Q_2\cup\cdots\cup Q_k$ for the partition of $X$ into blocks of $\Part(\phi)(P)$. For $Q_i$, write $Q_i=\phi(P_{i_1})\cup\cdots\cup \phi(P_{i_{d_i}})$ for the unique way of writing $Q_i$ as a union of images of blocks of $P$. Now let \[K=\{1_1,1_2,\ldots,1_{d_1},2_1,\ldots,2_{d_2},\ldots,\ldots,k_{d_k}\}\subseteq I\] be the set of indices of $I$ used in constructing $X$. We show that $\phi^{-1}(X)=\cup_{i\in K}P_i$.
		
		Suppose $x\in P_j\cap\phi^{-1}(X)$ for some $j\notin K$. Using surjectivity of $\phi$, we have $\phi(x)\in \phi(P_j)\cap X$. Then $\phi(x)\in\phi(P_j)\cap Q_\ell$ for some $1\leq\ell\leq k$. By Lemma \ref{lem:inducedpartitionsuction} it follows that $\phi(P_j)\subseteq Q_\ell\subseteq X$. Hence $P_j\subseteq\phi^{-1}\phi(P_j)\subseteq\phi^{-1}(X)$. This is a contradiction. So $j\in K$ and $\phi^{-1}(X)=\cup_{i\in K}P_i$. It follows that $X\in\Dynk(\phi)(\Sigma(P))$.
		
		Continuing to assume $\phi$ is a surjection, suppose that $X\in\Dynk(\phi)(\Sigma(P))$. Then $\phi^{-1}(X)\in\Sigma(P)$ so $\phi^{-1}(X)=\cup_{i\in K}P_i$ for some $K\subseteq I$. By surjectivity, it follows that $X=\cup_{i\in K}\phi(P_i)$. We now check that $X$ is closed under transitivity so that it is a union of blocks of $\Part(\phi)(P)$. Let $x\in X$ so that $\phi^{-1}(x)\subseteq\phi^{-1}(X)=\cup_{i\in K}P_i$. Let $\phi(y)=x$ for some $y\in P_\ell$ with $\ell\notin K$. Then $y\in\phi^{-1}(X)=\cup_{i\in K}P_i$. Then $P_\ell\cap P_i\neq\varnothing$ for some $i\in K$ and $\ell\notin K$. But the $P$ is a partition, so it must be the case that $i=\ell$, a contradiction.		
	\end{proof}

    \begin{rmk}
        By using the delooping functor of \cite[Appendix A]{beardsleynakamura} one can compute the simplicial set associated to the $\fun$-module $\Part$. I encourage the reader to ``draw some pictures'' to see the way in which face maps and degeneracy maps modify partitions. A face map $\Part(\ulp{n})\to\Part(\ulp{n-1})$ that ``adds'' $i$ and $j$ will merge the blocks containing $i$ and $j$, delete $j$, and decrement each index greater than $j$. On the other hand, a degeneracy map $\Part(\ulp{n})\to\Part(\ulp{n+1})$ which ``inserts the identity" in position $i$ corresponds by incrementing each index greater than $i$ and creating a new singleton block $\{i\}$. Note however that blocks can never be deleted, they can only be merged with the block containing the basepoint. I would be interested to know whether or not these sorts of functions have a relationship to Markov processes and coalescents, especially those in which some fixed element is ``followed'' throughout the process, as these are all pointed sets. For instance, all elements sharing the pointed block might be thought of as ``extinct'' in the process.
    \end{rmk}
	
	Proposition \ref{prop:partitionsubfunctor} implies the following.
	
	\begin{thm}\label{thm:partsubdynk}
		The $\fun$-module of partitions of finite sets is a proper submodule of the $\fun$-module of Dynkin systems on finite sets.
	\end{thm}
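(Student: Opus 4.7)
The plan is to assemble the block-generating maps $\Sigma_{\ulp{n}}\colon \Part(\ulp{n})\hookrightarrow \Dynk(\ulp{n})$ already appearing in Proposition \ref{prop:partitionsubfunctor} into a natural transformation of pointed functors $\Finp\to\Setp$, and then witness properness by exhibiting a single Dynkin system on a small finite pointed set that is not closed under intersection.

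First I would check that each component $\Sigma_{\ulp{n}}$ is a well-defined injection into $\Dynk(\ulp{n})$. Given a partition $P=\{P_i\}_{i\in I}$ of $\ulp{n}$, the collection of all subsets of $\ulp{n}$ of the form $\bigcup_{i\in J}P_i$ for $J\subseteq I$ is clearly closed under complements (the complement is indexed by $I\setminus J$), contains $\varnothing$ (take $J=\emptyset$), and is closed under disjoint unions, so it is a Dynkin system. Injectivity of $\Sigma_{\ulp{n}}$ is immediate since $P$ is recovered from $\Sigma_{\ulp{n}}(P)$ as its set of minimal nonempty elements (i.e.\ its atoms). Base-point preservation follows because the all-singletons partition (which is the basepoint of $\Part(\ulp{n})$) generates the full powerset $\mathcal{P}(\ulp{n})$, which is exactly the basepoint of $\Dynk(\ulp{n})$.

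Next I would appeal directly to Proposition \ref{prop:partitionsubfunctor}, which supplies the commuting square
\[\begin{tikzcd}
\Part(\ulp{n}) \ar[r,"\Part(\phi)"] \ar[d,hook,"\Sigma_{\ulp{n}}"'] & \Part(\ulp{m}) \ar[d,hook,"\Sigma_{\ulp{m}}"] \\
\Dynk(\ulp{n}) \ar[r,"\Dynk(\phi)"'] & \Dynk(\ulp{m})
\end{tikzcd}\]
for every pointed map $\phi\colon \ulp{n}\to\ulp{m}$. Combined with the pointed and injective nature of each $\Sigma_{\ulp{n}}$, this gives a monomorphism $\Sigma\colon \Part\to \Dynk$ in $\mathrm{Mod}_{\fun}$.

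Finally I would establish properness. It suffices to write down one pointed finite set on which some Dynkin system fails to be closed under intersection, since by the $\pi$-$\lambda$-theorem such a Dynkin system cannot lie in the image of $\Sigma$. On $\ulp{3}=\{0,1,2,3\}$ the family
\[ X=\bigl\{\varnothing,\,\{0,1\},\,\{2,3\},\,\{0,2\},\,\{1,3\},\,\{0,3\},\,\{1,2\},\,\{0,1,2,3\}\bigr\} \]
is closed under complements and under disjoint unions (each pair of nonempty, non-complementary elements already meets), so it is an element of $\Dynk(\ulp{3})$; but $\{0,1\}\cap\{0,2\}=\{0\}\notin X$, so $X\notin \Sigma_{\ulp{3}}(\Part(\ulp{3}))$. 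There is essentially no obstacle here: the substance of the theorem was already carried out in Proposition \ref{prop:partitionsubfunctor}, and what remains is the bookkeeping above together with this explicit counterexample.
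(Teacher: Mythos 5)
Your proposal is correct and takes essentially the same route as the paper: the entire substance is Proposition \ref{prop:partitionsubfunctor} (together with the preceding lemma that $\Dynk(\phi)$ preserves intersection-closedness), and the remaining bookkeeping you spell out---well-definedness and injectivity of $\Sigma$, basepoint preservation, and a witness to properness---is exactly what the paper asserts more tersely before and after that proposition. Your explicit non-intersection-closed Dynkin system on $\ulp{3}$ checks out and is in fact one of the four such systems the paper itself identifies (the $8$-tuple $(01110001)$ in the final section), so this is a welcome concretization rather than a different argument.
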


    One can check that the inclusion $\Part\hookrightarrow\Dynk$ truncates to an isomorphism of plasmas \[\tau_{\leq 2}\Part\xrightarrow{\cong}\tau_{\leq 2}\Dynk\] and that these are both isomorphic to the Krasner hypergroup $\KK$. Since $\widehat{H}$ is fully faithful, we also know that the unit of the $\tau_{\leq 2}\dashv\widehat{H}$ adjunction \[\Dynk\to\widehat{H}\tau_{\leq 2}\Dynk\] is an isomorphism. We can conclude therefore that $\Part$ cannot be the $\fun$-module associated to any plasma. 
	 
	 In this way, $\Part$ is an example of an $\fun$-module which is not, at its core, a set with an operation (even partially defined or multivalued). If we force ourselves to consider it as such, it becomes indiscernible from $\KK$. The truncated binary (hyper)operation of $\Part$ says that the set with elements $a=\{\{0\},\{1\}\}$ and $b=\{\{0,1\}\}$ has $a$ as an ``additive identity,'' and has $b+b=\{a,b\}$. This is a hyperoperation on the set of partitions of $\{0,1\}$. This is the same as $\KK$, but diverges for $\ulp{n}$ with $n>2$. For instance, there are 15 partitions of $\{0,1,2,3\}$ but 19 Dynkin systems. The missing elements in $\Part(\ulp{3})$ can be thought of disallowed ``associators'' in the algebraic structure of partitions inherent in thinking of $\Part$ as an $\fun$-module.

	 \section{The Geometry of the Simplicial Set of Dynkin Systems}\label{sec:Dynkin Systems as Simplices}

	 We now compute some of the low dimensional simplices of the pointed simplicial set \[B\widehat{H}\KK=B\Dynk\colon\Delta^{\op}\to\Setp\] Recall that if $M$ is a commutative monoid then $B\widehat{H}M$ is the bar construction on $M$. It has one 0-simplex and for each element of $M$ it has one $1$-simplex, i.e.~a ``directed path'' (indeed, $B\widehat{H}M$ is always a quasicategory). The composition operation of $B\widehat{H}M$ is precisely the addition of $M$ and the relationships between the elements, of the form $a+b=c$, are encoded by the 2-simplices which have $a$, $b$ and $c$ as faces. Associativity is then enforced by 3-simplices. The delooping of $\KK$ has a similar structure except that the addition is multivalued. As a result there are two 2-simplices whose zeroth and second face (indexed in the standard way by the vertex they are opposed to) are both 1. In other words, its 2-simplices are a (non-disjoint) union of the 2-simplices of the deloopings of $\mathbb{Z}/2$ and the Boolean monoid $\mathbb{B}$. 
	 
	  In dimension 0 of $B\widehat{H}\KK$, we have the unique 0-simplex $\{0\}$ and in dimension 1 we get $\KK=\{0,1\}$, i.e.~two 1-simplices, one of which is degenerate. Thus far, we have exactly the simplicial circle $\Delta^1/\partial\Delta^1$. In dimension 2 we record all possible ways of ``composing'' 1-simplices, or adding elements of $\KK$ (or traversing paths, from a geometric perspective). Geometrically, these give the following five 2-simplices:
	\begin{center}

\begin{tikzpicture}[scale=1, font=\footnotesize, every node/.style={inner sep=1pt}]

	\coordinate (v0) at (0,0);
	\coordinate  (v1) at (2,0);
	\coordinate (v2) at (1,1.7);
	
	\node at (v2) [above=2mm of v2] {$0+0=0$};

	\fill[blue!10] (v0) -- (v1) -- (v2) -- cycle;

	\draw[thick,->-] (v0) -- (v1) node[midway, below=3pt] {$0$};
	\draw[thick,->-] (v1) -- (v2) node[midway, right=4pt] {$0$};
	\draw[thick,->-] (v0) -- (v2) node[midway, left=4pt] {$0$};

	\node  at (v0) {$\bullet$};
	\node  at (v1) {$\bullet$};
	\node  at (v2) {$\bullet$};

	\node at (1,0.6) {$A$};
\end{tikzpicture}
	 ~
	 \begin{tikzpicture}[scale=1, font=\footnotesize, every node/.style={inner sep=1pt}]

	 	\coordinate (v0) at (0,0);
	 	\coordinate (v1) at (2,0);
	 	\coordinate (v2) at (1,1.7);
	 	
	 	\node at (v2) [above=2mm of v2] {$0+1=1$};

	 	\fill[blue!10] (v0) -- (v1) -- (v2) -- cycle;

	 	\draw[thick,->-] (v0) -- (v1) node[midway, below=3pt] {$0$};
	 	\draw[thick,->-] (v1) -- (v2) node[midway, right=4pt] {$1$};
	 	\draw[thick,->-] (v0) -- (v2) node[midway, left=4pt] {$1$};
	 	
	 	\node  at (v0) {$\bullet$};
	 	\node  at (v1) {$\bullet$};
	 	\node  at (v2) {$\bullet$};

	 	\node at (1,0.6) {$B$};
	 \end{tikzpicture}
	 ~\begin{tikzpicture}[scale=1, font=\footnotesize, every node/.style={inner sep=1pt}]
	 	\coordinate (v0) at (0,0);
	 	\coordinate (v1) at (2,0);
	 	\coordinate (v2) at (1,1.7);
	 	
	 		\node at (v2) [above=2mm of v2] {$1+0=1$};

	 	\fill[blue!10] (v0) -- (v1) -- (v2) -- cycle;
	 	
	 	\draw[thick,->-] (v0) -- (v1) node[midway, below=3pt] {$1$};
	 	\draw[thick,->-] (v1) -- (v2) node[midway, right=4pt] {$0$};
	 	\draw[thick,->-] (v0) -- (v2) node[midway, left=4pt] {$1$};

	 	\node  at (v0) {$\bullet$};
	 	\node  at (v1) {$\bullet$};
	 	\node  at (v2) {$\bullet$};
	 	
	 	\node at (1,0.6) {$C$};
	 \end{tikzpicture}
	 ~\\\begin{tikzpicture}[scale=1, font=\footnotesize, every node/.style={inner sep=1pt}]

	 	\coordinate (v0) at (0,0);
	 	\coordinate (v1) at (2,0);
	 	\coordinate (v2) at (1,1.7);
	 	
	 	\node at (v2) [above=2mm of v2] {$1+1=0$};

	 	\fill[blue!10] (v0) -- (v1) -- (v2) -- cycle;

	 	\draw[thick,->-] (v0) -- (v1) node[midway, below=3pt] {$1$};
	 	\draw[thick,->-] (v1) -- (v2) node[midway, right=4pt] {$1$};
	 	\draw[thick,->-] (v0) -- (v2) node[midway, left=4pt] {$0$};

	 	\node  at (v0) {$\bullet$};
	 	\node  at (v1) {$\bullet$};
	 	\node  at (v2) {$\bullet$};
	 	
	 	\node at (1,0.6) {$D$};
	 \end{tikzpicture}
	 ~
	 \begin{tikzpicture}[scale=1, font=\footnotesize, every node/.style={inner sep=1pt}]

	 	\coordinate (v0) at (0,0);
	 	\coordinate (v1) at (2,0);
	 	\coordinate (v2) at (1,1.7);
	 	
	 	\node at (v2) [above=2mm of v2] {$1+1=1$};
	 	
	 	\fill[blue!10] (v0) -- (v1) -- (v2) -- cycle;

	 	\draw[thick,->-] (v0) -- (v1) node[midway, below=3pt] {$1$};
	 	\draw[thick,->-] (v1) -- (v2) node[midway, right=4pt] {$1$};
	 	\draw[thick,->-] (v0) -- (v2) node[midway, left=4pt] {$1$};

	 	\node at (v0) {$\bullet$};
	 	\node at (v1) {$\bullet$};
	 	\node  at (v2) {$\bullet$};
	 	
	 	\node at (1,0.6) {$E$};
	 \end{tikzpicture}
	 \end{center}
	 	 In other words, if we traverse 1 twice this is deformable, in a sense, both to traversing 1 only once and to not moving at all.
	 
	  In dimension 3, elements of $B\widehat{H}\KK(\ulp{3})$ can be interpreted as recording all possible triple traversals of 1-simplices and the multiple ways in which these can thought of as associative. The reason that there are 19 elements is that, due to the hyperoperation of $\KK$ being multivalued, there are several degrees of indeterminacy in deciding how to do this. Here are eight of the 19 elements, with faces labeled as above.
	  \begin{center}
	  \begin{tikzpicture}[line join = round, line cap = round,scale=1.2,font=\footnotesize]
	  	
	  	\coordinate (3) at (0,{sqrt(2)},0);
	  	\coordinate (2) at ({-.5*sqrt(3)},0,-.5);
	  	\coordinate (1) at (0,0,1);
	  	\coordinate (0) at ({.5*sqrt(3)},0,-.5);
	  	
	  	\node at (3) {$\bullet$};
	  	\node at (2) {$\bullet$};
	  	
	  	\node[above] at (3) {(00000000)};
	  	
	  	\node at (0) {$\bullet$};
	  	
	  	\begin{scope}[decoration={markings,mark=at position 0.5 with {\arrow{to}}}]
	  		\draw[thick, postaction={decorate}] (2)--(0);
	  		\draw[fill=blue!10,fill opacity=.5] (1)--(0)--(3)--cycle;
	  		\draw[fill=blue!10,fill opacity=.5] (2)--(1)--(3)--cycle;
	  		\draw[thick, postaction={decorate}] (1)--(0) node[midway,below]{$0$};
	  		\draw[thick, postaction={decorate}] (2)--(1) node[midway,left]{$0$};
	  		\draw[thick, postaction={decorate}] (2)--(3);
	  		\draw[thick, postaction={decorate}] (1)--(3);
	  		\draw[thick, postaction={decorate}] (0)--(3) node[midway,right]{$0$};
	  	\end{scope}
	  	
	  	\node at (1) {$\bullet$};
	  	\node[opacity=.5] at (barycentric cs:0=0.5,1=0.5 ,2=0.5) {$A$};
	  	\node at (barycentric cs:0=0.5,1=0.5 ,3=0.5) {$A$};
	  	\node at (barycentric cs:1=0.2,2=0.6 ,3=0.5) {$A$};
	  	\node[opacity=.5] at (barycentric cs:0=0.5,2=0.5 ,3=0.5) {$A$};
	  \end{tikzpicture}
	  \begin{tikzpicture}[line join = round, line cap = round,scale=1.2,font=\footnotesize]
	  	
	  	\coordinate (3) at (0,{sqrt(2)},0);
	  	\coordinate (2) at ({-.5*sqrt(3)},0,-.5);
	  	\coordinate (1) at (0,0,1);
	  	\coordinate (0) at ({.5*sqrt(3)},0,-.5);
	  	
	  	\node at (3) {$\bullet$};
	  	\node at (2) {$\bullet$};
	  	
	  	\node at (0) {$\bullet$};
	  	
	  	\node[above] at (3) {(01011010)};
	  	
	  	\begin{scope}[decoration={markings,mark=at position 0.5 with {\arrow{to}}}]
	  		\draw[thick, postaction={decorate}] (2)--(0);
	  		\draw[fill=blue!10,fill opacity=.5] (1)--(0)--(3)--cycle;
	  		\draw[fill=blue!10,fill opacity=.5] (2)--(1)--(3)--cycle;
	  		\draw[thick, postaction={decorate}] (1)--(0) node[midway,below]{$0$};
	  		\draw[thick, postaction={decorate}] (2)--(1) node[midway,left]{$1$};
	  		\draw[thick, postaction={decorate}] (2)--(3);
	  		\draw[thick, postaction={decorate}] (1)--(3);
	  		\draw[thick, postaction={decorate}] (0)--(3) node[midway,right]{$1$};
	  	\end{scope}
	  	
	  	\node at (1) {$\bullet$};
	  	\node[opacity=.5] at (barycentric cs:0=0.5,1=0.5 ,2=0.5) {$C$};
	  	\node at (barycentric cs:0=0.5,1=0.5 ,3=0.5) {$B$};
	  	\node at (barycentric cs:1=0.2,2=0.5 ,3=0.4) {~$D$};
	  	\node[opacity=.5] at (barycentric cs:0=0.5,2=0.5 ,3=0.5) {$D$};
	  \end{tikzpicture}
	  	  \begin{tikzpicture}[line join = round, line cap = round,scale=1.2, font=\footnotesize]
	  	
	  	\coordinate (3) at (0,{sqrt(2)},0);
	  	\coordinate (2) at ({-.5*sqrt(3)},0,-.5);
	  	\coordinate (1) at (0,0,1);
	  	\coordinate (0) at ({.5*sqrt(3)},0,-.5);
	  	
	  	\node at (3) {$\bullet$};
	  	\node at (2) {$\bullet$};
	  	
	  	\node at (0) {$\bullet$};
	  	
	  	\node[above] at (3) {(01011111)};
	  	
	  	\begin{scope}[decoration={markings,mark=at position 0.5 with {\arrow{to}}}]
	  		\draw[thick, postaction={decorate}] (2)--(0);
	  		\draw[fill=blue!10,fill opacity=.5] (1)--(0)--(3)--cycle;
	  		\draw[fill=blue!10,fill opacity=.5] (2)--(1)--(3)--cycle;
	  		\draw[thick, postaction={decorate}] (1)--(0) node[midway,below]{$0$};
	  		\draw[thick, postaction={decorate}] (2)--(1) node[midway,left]{$1$};
	  		\draw[thick, postaction={decorate}] (2)--(3);
	  		\draw[thick, postaction={decorate}] (1)--(3);
	  		\draw[thick, postaction={decorate}] (0)--(3) node[midway,right]{$1$};
	  	\end{scope}
	  	
	  	\node at (1) {$\bullet$};
	  	\node[opacity=.5] at (barycentric cs:0=0.5,1=0.5 ,2=0.5) {$C$};
	  	\node at (barycentric cs:0=0.5,1=0.5 ,3=0.5) {$B$};
	  	\node at (barycentric cs:1=0.2,2=0.5 ,3=0.4) {~$E$};
	  	\node[opacity=.5] at (barycentric cs:0=0.5,2=0.5 ,3=0.5) {$E$};
	  \end{tikzpicture}
	  	  	  \begin{tikzpicture}[line join = round, line cap = round,scale=1.2, font=\footnotesize]
	  	
	  	\coordinate (3) at (0,{sqrt(2)},0);
	  	\coordinate (2) at ({-.5*sqrt(3)},0,-.5);
	  	\coordinate (1) at (0,0,1);
	  	\coordinate (0) at ({.5*sqrt(3)},0,-.5);
	  	
	  	\node at (3) {$\bullet$};
	  	\node at (2) {$\bullet$};
	  	
	  	\node at (0) {$\bullet$};
	  	
	  	\node[above] at (3) {(01111111)};
	  	
	  	\begin{scope}[decoration={markings,mark=at position 0.5 with {\arrow{to}}}]
	  		\draw[thick, postaction={decorate}] (2)--(0);
	  		\draw[fill=blue!10,fill opacity=.5] (1)--(0)--(3)--cycle;
	  		\draw[fill=blue!10,fill opacity=.5] (2)--(1)--(3)--cycle;
	  		\draw[thick, postaction={decorate}] (1)--(0) node[midway,below]{$0$};
	  		\draw[thick, postaction={decorate}] (2)--(1) node[midway,left]{$1$};
	  		\draw[thick, postaction={decorate}] (2)--(3);
	  		\draw[thick, postaction={decorate}] (1)--(3);
	  		\draw[thick, postaction={decorate}] (0)--(3) node[midway,right]{$1$};
	  	\end{scope}
	  	
	  	\node at (1) {$\bullet$};
	  	\node[opacity=.5] at (barycentric cs:0=0.5,1=0.5 ,2=0.5) {$E$};
	  	\node at (barycentric cs:0=0.5,1=0.5 ,3=0.5) {$E$};
	  	\node at (barycentric cs:1=0.2,2=0.5 ,3=0.4) {~$E$};
	  	\node[opacity=.5] at (barycentric cs:0=0.5,2=0.5 ,3=0.5) {$E$};
	  \end{tikzpicture}
	  
	  	  \begin{tikzpicture}[line join = round, line cap = round,scale=1.2,font=\footnotesize]
	  	
	  	\coordinate (3) at (0,{sqrt(2)},0);
	  	\coordinate (2) at ({-.5*sqrt(3)},0,-.5);
	  	\coordinate (1) at (0,0,1);
	  	\coordinate (0) at ({.5*sqrt(3)},0,-.5);
	  	
	  	\node at (3) {$\bullet$};
	  	\node at (2) {$\bullet$};
	  	
	  	\node[above] at (3) {(01111001)};
	  	
	  	\node at (0) {$\bullet$};
	  	
	  	\begin{scope}[decoration={markings,mark=at position 0.5 with {\arrow{to}}}]
	  		\draw[thick, postaction={decorate}] (2)--(0);
	  		\draw[fill=blue!10,fill opacity=.5] (1)--(0)--(3)--cycle;
	  		\draw[fill=blue!10,fill opacity=.5] (2)--(1)--(3)--cycle;
	  		\draw[thick, postaction={decorate}] (1)--(0) node[midway,below]{$1$};
	  		\draw[thick, postaction={decorate}] (2)--(1) node[midway,left]{$1$};
	  		\draw[thick, postaction={decorate}] (2)--(3);
	  		\draw[thick, postaction={decorate}] (1)--(3);
	  		\draw[thick, postaction={decorate}] (0)--(3) node[midway,right]{$1$};
	  	\end{scope}
	  	
	  	\node at (1) {$\bullet$};
	  	\node[opacity=.5] at (barycentric cs:0=0.5,1=0.5 ,2=0.5) {$E$};
	  	\node at (barycentric cs:0=0.5,1=0.5 ,3=0.5) {$D$};
	  	\node at (barycentric cs:1=0.2,2=0.6 ,3=0.5) {~$C$};
	  	\node[opacity=.5] at (barycentric cs:0=0.5,2=0.5 ,3=0.5) {$E$};
	  \end{tikzpicture}
	  \begin{tikzpicture}[line join = round, line cap = round,scale=1.2,font=\footnotesize]
	  	
	  	\coordinate (3) at (0,{sqrt(2)},0);
	  	\coordinate (2) at ({-.5*sqrt(3)},0,-.5);
	  	\coordinate (1) at (0,0,1);
	  	\coordinate (0) at ({.5*sqrt(3)},0,-.5);
	  	
	  	\node at (3) {$\bullet$};
	  	\node at (2) {$\bullet$};
	  	
	  	\node at (0) {$\bullet$};
	  	
	  	\node[above] at (3) {(00010111)};
	  	
	  	\begin{scope}[decoration={markings,mark=at position 0.5 with {\arrow{to}}}]
	  		\draw[thick, postaction={decorate}] (2)--(0);
	  		\draw[fill=blue!10,fill opacity=.5] (1)--(0)--(3)--cycle;
	  		\draw[fill=blue!10,fill opacity=.5] (2)--(1)--(3)--cycle;
	  		\draw[thick, postaction={decorate}] (1)--(0) node[midway,below]{$0$};
	  		\draw[thick, postaction={decorate}] (2)--(1) node[midway,left]{$0$};
	  		\draw[thick, postaction={decorate}] (2)--(3);
	  		\draw[thick, postaction={decorate}] (1)--(3);
	  		\draw[thick, postaction={decorate}] (0)--(3) node[midway,right]{$1$};
	  	\end{scope}
	  	
	  	\node at (1) {$\bullet$};
	  	\node[opacity=.5] at (barycentric cs:0=0.5,1=0.5 ,2=0.5) {$A$};
	  	\node at (barycentric cs:0=0.5,1=0.5 ,3=0.5) {$B$};
	  	\node at (barycentric cs:1=0.2,2=0.5 ,3=0.4) {~$B$};
	  	\node[opacity=.5] at (barycentric cs:0=0.5,2=0.5 ,3=0.5) {$B$};
	  \end{tikzpicture}
	  \begin{tikzpicture}[line join = round, line cap = round,scale=1.2, font=\footnotesize]
	  	
	  	\coordinate (3) at (0,{sqrt(2)},0);
	  	\coordinate (2) at ({-.5*sqrt(3)},0,-.5);
	  	\coordinate (1) at (0,0,1);
	  	\coordinate (0) at ({.5*sqrt(3)},0,-.5);
	  	
	  	\node at (3) {$\bullet$};
	  	\node at (2) {$\bullet$};
	  	
	  	\node at (0) {$\bullet$};
	  	
	  	\node[above] at (3) {(01111110)};
	  	
	  	\begin{scope}[decoration={markings,mark=at position 0.5 with {\arrow{to}}}]
	  		\draw[thick, postaction={decorate}] (2)--(0);
	  		\draw[fill=blue!10,fill opacity=.5] (1)--(0)--(3)--cycle;
	  		\draw[fill=blue!10,fill opacity=.5] (2)--(1)--(3)--cycle;
	  		\draw[thick, postaction={decorate}] (1)--(0) node[midway,below]{$1$};
	  		\draw[thick, postaction={decorate}] (2)--(1) node[midway,left]{$1$};
	  		\draw[thick, postaction={decorate}] (2)--(3);
	  		\draw[thick, postaction={decorate}] (1)--(3);
	  		\draw[thick, postaction={decorate}] (0)--(3) node[midway,right]{$1$};
	  	\end{scope}
	  	
	  	\node at (1) {$\bullet$};
	  	\node[opacity=.5] at (barycentric cs:0=0.5,1=0.5 ,2=0.5) {$E$};
	  	\node at (barycentric cs:0=0.5,1=0.5 ,3=0.5) {$E$};
	  	\node at (barycentric cs:1=0.2,2=0.5 ,3=0.4) {~$D$};
	  	\node[opacity=.5] at (barycentric cs:0=0.5,2=0.5 ,3=0.5) {$D$};
	  \end{tikzpicture}
	  \begin{tikzpicture}[line join = round, line cap = round,scale=1.2, font=\footnotesize]
	  	
	  	\coordinate (3) at (0,{sqrt(2)},0);
	  	\coordinate (2) at ({-.5*sqrt(3)},0,-.5);
	  	\coordinate (1) at (0,0,1);
	  	\coordinate (0) at ({.5*sqrt(3)},0,-.5);
	  	
	  	\node at (3) {$\bullet$};
	  	\node at (2) {$\bullet$};
	  	
	  	\node at (0) {$\bullet$};
	  	
	  	\node[above] at (3) {(01101111)};
	  	
	  	\begin{scope}[decoration={markings,mark=at position 0.5 with {\arrow{to}}}]
	  		\draw[thick, postaction={decorate}] (2)--(0);
	  		\draw[fill=blue!10,fill opacity=.5] (1)--(0)--(3)--cycle;
	  		\draw[fill=blue!10,fill opacity=.5] (2)--(1)--(3)--cycle;
	  		\draw[thick, postaction={decorate}] (1)--(0) node[midway,below]{$1$};
	  		\draw[thick, postaction={decorate}] (2)--(1) node[midway,left]{$1$};
	  		\draw[thick, postaction={decorate}] (2)--(3);
	  		\draw[thick, postaction={decorate}] (1)--(3);
	  		\draw[thick, postaction={decorate}] (0)--(3) node[midway,right]{$0$};
	  	\end{scope}
	  	
	  	\node at (1) {$\bullet$};
	  	\node[opacity=.5] at (barycentric cs:0=0.5,1=0.5 ,2=0.5) {$E$};
	  	\node at (barycentric cs:0=0.5,1=0.5 ,3=0.5) {$C$};
	  	\node at (barycentric cs:1=0.2,2=0.5 ,3=0.4) {~$E$};
	  	\node[opacity=.5] at (barycentric cs:0=0.5,2=0.5 ,3=0.5) {$C$};
	  \end{tikzpicture}
	  \end{center}

	  A 3-simplex can be represented by a binary 8-tuple $(x_0x_1x_2x_3x_4x_5x_6x_7)$ thought of as the function $\mathcal{P}(3)\to\KK$ with the domain ordered as \[(\varnothing,\{1\}, \{2\}, \{3\}, \{1,2\},\{1,3\},\{2,3\},\{1,2,3\})\]
	  All such 8-tuples correspond to valid Dynkin systems on $\{0,1,2,3\}$. However, the 8-tuples $(01110001)$, $(01111001)$, $(01110101)$ and $(01110011)$ correspond to the four Dynkin systems which are not intersection closed and therefore do not correspond to partitions. Equivalently, they are ``associators'' which are not admissible for the hyperoperation on $\Part(\ulp{1})$ described at the end of the last section.
      
      Noticing that the leading 0 is redundant in all 8-tuples (since $\varnothing\mapsto 0$ for any plasma morphism $\mathcal{P}(3)\to\KK$), we may also interpret each 3-simplex as a 7-tuple of the form \[(a_1,a_2,a_3,a_1+a_2,a_1+a_3,a_2+a_3,a_1+a_2+a_3)\] in which $a_i\in\KK$ for each $1\leq i\leq 3$ and the sums are also computed in $\KK$. However, these sums must satisfy relations forcing a version of associativity corresponding to ``ways to get from $(a_1,a_2,a_3)$ to $a_1+a_2+a_3$'' in $\KK$ (mirrored by their associated 3-simplices). The most important relation to be satisfied in such 7-tuples is the following:
      \[a_1+a_2+a_3\in ((a_1+a_2)+a_3))\cap ((a_1+a_3)+a_2)\cap (a_1+(a_2+a_3))\] where each binary sum is determined by the preceding entries of the 7-tuple. A complete description of the conditions that a 3-simplex or 4-simplex must satisfy can be found in \cite[Example 3.16]{beardsleynakamura}.

			\printbibliography
			
		\end{document}